\renewcommand{\parallel}{\hbox{/\kern -2pt/}}
\newtheorem{theorem}{Theorem}[section]
\newtheorem{lemma}[theorem]{Lemma}
\newtheorem{corollary}[theorem]{Corollary}
\newtheorem{conjecture}[theorem]{Conjecture}
\theoremstyle{definition}
\newtheorem{definition}[theorem]{Definition}
\newtheorem{example}[theorem]{Example}
\theoremstyle{remark}
\newtheorem{remark}[theorem]{Remark}
\numberwithin{equation}{section}
\title{Open Cones and $K$-theory for $\ell^p$ Roe Algebras}
\begin{document}

\author{Jianguo Zhang}

\maketitle

\begin{abstract}
    In this paper, we verify the $\ell^p$ coarse Baum-Connes conjecture for open cones and show that the $K$-theory for $\ell^p$ Roe algebras of open cones is independent of $p\in[1,\infty)$. Combined with the result of T. Fukaya and S.-I. Oguni, we give an application to the class of coarsely convex spaces that includes geodesic Gromov hyperbolic spaces, CAT(0)-spaces, certain Artin groups and Helly groups equipped with the word length metric. \\
    \:\:\:  \\ 
\textit{Keywords.} $K$-theory, $\ell^p$ Roe algebras, open cones.\\
\textit{2020 Mathematics Subject Classification.} Primary 19K99, 46H99.
\end{abstract}

\let\thefootnote\relax\footnotetext{
Address: Research Center for Operator Algebras, East China Normal University, Shanghai 200062, China.\par
E-mail: jgzhang@math.ecnu.edu.cn\par     
This work was partially supported by NSFC (No. 11771143, No. 12171156) and by Shanghai Key Laboratory of Pure Mathematics and Mathematical Practice. }

\section{Introduction}
    The coarse Baum-Connes conjecture provides an algorithm to compute the higher indices of generalized elliptic operators on open Riemannian manifolds which lie in the $K$-theory of Roe algebras associated to underlying manifolds \cite{RoeBookCoarseCohomology, HigsonRoeCBC, YuCBC}. The conjecture has some significant applications to topology and geometry, in particular, to the Novikov conjecture \cite{HigsonRoeBook, WillettYuBook}. The coarse Baum-Connes conjecture has been verified for a large class of spaces, such as open cones \cite{HigsonRoeCBC}, metric spaces with finite asymptotic dimension \cite{YuFAD} and metric spaces which can be coarsely embedded into a Hilbert space \cite{YuEmbedding}, as well as disproved for large spheres \cite{YuFAD} and expanders \cite{HigsonLafforgueSkandalis}. \par
    In recent years, the $\ell^p$ coarse Baum-Connes conjecture for $p\in [1,\infty)$ (cf. Conjecture \ref{CBC}) gained attention, one of the motivations is to compute the $K$-theory for $\ell^p$ Roe algebras (cf. Definition \ref{DefRoeAlg}). In \cite{ZZ}, based on the work of G. Yu \cite{YuFAD}, the author and D. Zhou proved that for $p\in[1,\infty)$, the $\ell^p$ coarse Baum-Connes conjecture holds for spaces with finite asymptotic dimension and the $K$-theory for $\ell^p$ Roe algebras of such spaces is independent of $p\in (1,\infty)$. In \cite{ShanWang}, for $p\in(1,\infty)$, L. Shan and Q. Wang proved that the injective part of the $\ell^p$ coarse Baum-Connes conjecture is true for metric spaces with bounded geometry which can be coarsely embedded into a simply connected complete Riemannian manifold of non-positive sectional curvature. On the other hand, in \cite{ChungNowakPCBC}, Y. C. Chung and P. W. Nowak proved that expanders are still counterexamples to the $\ell^p$ coarse Baum-Connes conjecture for $p\in(1,\infty)$.  \par
    There are also some results in the literature concerning the structure, rigidity and $K$-theory of $\ell^p$ uniform Roe algebras, $L^p$ group algebras and $L^p$ operator crossed products, refer to \cite{ChungDynamical, ChungLiRigidity, ChungLiLpUniformRoe, GardellaThielLp, GardellaThielLp2019, KasparovYuPBC, LiaoYu, PhillipsLp, PhillipsSimpLp}. \par
    In this paper, we consider the $\ell^p$ coarse Baum-Connes conjecture for open cones (cf. Definition \ref{DefOpenCone}) and hence obtain a formula to compute the $K$-theory for $\ell^p$ Roe algebras of open cones. The main result of the paper is the following theorem that generalizes N. Higson and J. Roe's result on the coarse Baum-Connes conjecture for open cones in \cite{HigsonRoeCBC} to all $p\in[1,\infty)$.
    \begin{theorem}[cf. Theorem \ref{mainthm}] \label{thmintro}
    Let $\mathcal{O}M$ be the open cone over a compact metric space $M$, then for any $p\in [1,\infty)$, the $\ell^p$ coarse Baum-Connes conjecture holds for $\mathcal{O}M$.
    \end{theorem}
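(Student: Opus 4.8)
The plan is to formulate the $\ell^p$ coarse Baum-Connes conjecture for $\mathcal{O}M$ as the assertion that the assembly map $\mu_p$ is an isomorphism, and — following the localization-algebra framework of \cite{YuFAD, ZZ} — to realize $\mu_p$ as the evaluation map $e_*\colon K_*(B^p_L(\mathcal{O}M))\to K_*(B^p(\mathcal{O}M))$ from the $K$-theory of the $\ell^p$ localization algebra to that of the $\ell^p$ Roe algebra. I would then prove that $e_*$ is an isomorphism by d\'evissage over the base space $M$. The engine is a coarse Mayer-Vietoris principle: for a decomposition $M=A\cup B$ into compact subsets one has $\mathcal{O}M=\mathcal{O}A\cup\mathcal{O}B$ with $\mathcal{O}A\cap\mathcal{O}B$ coarsely equivalent to $\mathcal{O}(A\cap B)$, and the radial scaling of the cone forces this decomposition to be coarsely excisive. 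First I would show that both $K_*(B^p_L(-))$ and $K_*(B^p(-))$ send such cone decompositions to long exact sequences, and that $e_*$ is natural with respect to them, so the sequences assemble into a commuting ladder to which the five lemma applies.

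Next comes the base case. The cone over a point is the ray $[0,\infty)$, which is flasque: the translation $t\mapsto t+1$ supports an $\ell^p$ Eilenberg swindle, giving $K_*(B^p([0,\infty)))=0$ and likewise for the localization algebra, so that $e_*$ is trivially an isomorphism over a point. I would then approximate the compact metric space $M$ by the nerves of a sequence of finite open covers of shrinking diameter, exhausting $\mathcal{O}M$ by cones over finite simplicial complexes. An induction on the number of simplices — passing from the $(k-1)$-skeleton to the $k$-skeleton by Mayer-Vietoris and invoking the five lemma at each stage — shows that $e_*$ is an isomorphism for cones over finite complexes, and continuity of $K$-theory under the relevant inductive limits then upgrades this to $\mathcal{O}M$ itself.

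The main obstacle is the functional-analytic groundwork, since $B^p$ and $B^p_L$ are Banach algebras rather than $C^*$-algebras and none of the Hilbert-space tools (adjoints, functional calculus, positivity of cutoffs) are available. Concretely, the hard part is establishing the coarse Mayer-Vietoris exactness at the $\ell^p$ level: one must show that the ideals and quotients attached to $\mathcal{O}M=\mathcal{O}A\cup\mathcal{O}B$ fit into a short exact sequence of $\ell^p$ Roe (resp. localization) algebras admissible enough to induce a long exact sequence in $K$-theory, and that the swindle yielding flasqueness converges in the $\ell^p$ operator norm. The delicate point is to control propagation and the $\ell^p$ norms of the cutoff operators that split the decomposition \emph{uniformly in the scaling parameter} of the cone; once these estimates are secured, the remainder of the d\'evissage is formal.
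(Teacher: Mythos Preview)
Your route differs from the paper's and carries a genuine gap in the limit step. The paper never decomposes $M$: it exploits the global \emph{scaleability} of $\mathcal{O}M$. The halving map $f(t,x)=(t/2,x)$ is coarsely homotopic to the identity (Lemma~\ref{SameHom}) and contracts all distances by $1/2$; implementing $f$ by an invertible isometry $V$ on $E^p$, one forms $\phi(u)(s)=\bigoplus_{n\geq 0}V^{n+1}u(s-n)(V^+)^{n+1}$ on $B^p_{L,0}(\mathcal{O}M;E^{p,\infty})$, and a one-parameter homotopy $\Psi_\lambda$ gives the swindle identity $\phi_*+\psi_*=\phi_*$, killing $K_*(B^p_{L,0}(\mathcal{O}M))$ in a single stroke for \emph{arbitrary} compact $M$. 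In other words, the Eilenberg swindle you reserve for the ray is in fact available on the whole cone; that is the idea your proposal misses.

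Contrary to your diagnosis, the real obstacle to the d\'evissage is not the $\ell^p$ functional analysis but the geometry. First, the decomposition $\mathcal{O}M=\mathcal{O}A\cup\mathcal{O}B$ is \emph{not} coarsely excisive for arbitrary closed $A,B\subset M$: take $M=\{0\}\cup\{1/n:n\geq 1\}$, $A=\{0\}\cup\{1/(2n)\}$, $B=\{0\}\cup\{1/(2n-1)\}$; then $(t,1/(2n))\in\mathcal{O}A$ lies within order $t/n^{2}$ of $\mathcal{O}B$ but only within order $t/n$ of $\mathcal{O}(A\cap B)=\mathcal{O}\{0\}$, so no uniform $S$ works and coarse Mayer--Vietoris on the Roe side is unavailable. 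Second, and fatally, the step ``continuity of $K$-theory under the relevant inductive limits'' does not go through: the nerves $N_{C_k}$ of covers of $M$ are not subspaces of $M$, so the cones $\mathcal{O}N_{C_k}$ do not exhaust $\mathcal{O}M$, and $K_*(B^p(-))$ is a coarse invariant rather than a homology theory, with no continuity along inverse systems of compacta. Even if your induction succeeds for finite complexes (where excisiveness can be arranged simplicially), nothing transfers the conclusion to general compact $M$. The paper's scaling swindle sidesteps both issues entirely.
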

    In \cite{ZZ}, the author and D. Zhou showed that the left-hand side of the $\ell^p$ coarse Baum-Connes conjecture for metric spaces with bounded geometry does not depend on $p\in(1,\infty)$, based on this, we expand this result to all metric spaces and all $p\in[1,\infty)$ in the Section \ref{appendixA}. Thus we have the following corollary.
    \begin{corollary}[cf. Corollary \ref{indepen}] \label{corintro}
    Let $\mathcal{O}M$ be the open cone over a compact metric space $M$, then for any $p\in [1,\infty)$, the group $K_{\ast}(B^p(\mathcal{O}M))$ is isomorphic to the group $K_{\ast}(B^2(\mathcal{O}M))$, i.e. the $K$-theory for the $\ell^p$ Roe algebra of $\mathcal{O}M$ does not depend on $p\in[1,\infty)$. 
    \end{corollary}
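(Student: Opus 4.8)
The plan is to combine the two ingredients already assembled in the paper: Theorem \ref{mainthm}, which identifies $K_*(B^p(\mathcal{O}M))$ with the left-hand side of Conjecture \ref{CBC} through the assembly map, and the independence of that left-hand side established in Section \ref{appendixA}. Chaining these transports $K_*(B^p(\mathcal{O}M))$ onto a single group that does not see $p$, with $p=2$ as the distinguished base point since $B^2(\mathcal{O}M)$ is the classical Roe algebra.

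Write $KX^p_*(\mathcal{O}M)$ for the source of the $\ell^p$ assembly map $\mu_p$, i.e. the left-hand side of Conjecture \ref{CBC}; this is built from $\ell^p$ operators and so depends on $p$ a priori. First I would invoke Theorem \ref{mainthm}: the $\ell^p$ coarse Baum-Connes conjecture holds for $\mathcal{O}M$, so
$$\mu_p \colon KX^p_*(\mathcal{O}M) \longrightarrow K_*(B^p(\mathcal{O}M))$$
is an isomorphism for every $p\in[1,\infty)$; in particular this holds both for the given $p$ and for $2$. Second, I would apply the result of Section \ref{appendixA}, which extends \cite{ZZ} from bounded-geometry spaces to arbitrary metric spaces and from $p\in(1,\infty)$ to $p\in[1,\infty)$: the left-hand side of the $\ell^p$ coarse Baum-Connes conjecture does not depend on $p$, giving an isomorphism $KX^p_*(\mathcal{O}M)\cong KX^2_*(\mathcal{O}M)$.

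Finally, composing these maps produces a chain of isomorphisms
$$K_*(B^p(\mathcal{O}M)) \xleftarrow{\mu_p} KX^p_*(\mathcal{O}M) \xrightarrow{\sim} KX^2_*(\mathcal{O}M) \xrightarrow{\mu_2} K_*(B^2(\mathcal{O}M)),$$
whose composite is the asserted isomorphism $K_*(B^p(\mathcal{O}M))\cong K_*(B^2(\mathcal{O}M))$. I would stress that for the bare statement no naturality in $p$ is needed, only that each individual arrow is an isomorphism, so the argument is formal once the two inputs are granted.

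The real content therefore sits in Section \ref{appendixA}, and the point I would check most carefully is that its independence statement genuinely applies to $\mathcal{O}M$. Open cones need not have bounded geometry, so the original bounded-geometry hypothesis of \cite{ZZ} does not cover them directly; this is precisely the reason the extension to all metric spaces is carried out in Section \ref{appendixA}, and I would confirm that $\mathcal{O}M$ satisfies whatever residual hypotheses remain there (for instance that it is a proper metric space). This compatibility, rather than the assembly step itself, is the main obstacle.
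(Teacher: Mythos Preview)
Your proposal is correct and matches the paper's own argument essentially verbatim: the paper derives Corollary~\ref{indepen} by combining Theorem~\ref{mainthm} with Corollary~\ref{indepRoe}, the latter being exactly the abstract statement that the $\ell^p$ conjecture plus Lemma~\ref{indepofloc} (Section~\ref{appendixA}) forces $p$-independence of $K_*(B^p(X))$. Your caution about hypotheses is also on target: the paper notes after Definition~\ref{DefOpenCone} that $\mathcal{O}M$ is a proper metric space, which is all that Lemma~\ref{indepofloc} requires.
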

    As an application of the theorem and corollary, combined with the result of T. Fukaya and S.-I. Oguni in \cite{FukayaOguniCCH}, we show that the $\ell^p$ coarse Baum-Connes conjecture holds for any proper coarsely convex space (cf. Definition \ref{CoarselyConvex}) and the $K$-theory for $\ell^p$ Roe algebras of such spaces is independent of $p\in[1,\infty)$. The class of proper coarsely convex space includes geodesic Gromov hyperbolic spaces, CAT(0)-spaces, certain Artin groups and Helly groups equipped with the word length metric, see Section \ref{Apps} for more examples.\par
    The paper is organized as follows. In Section \ref{Preliminaries}, we recall some facts of the $\ell^p$ coarse Baum-Connes conjecture for $p\in[1,\infty)$. In Section \ref{MainResults}, we give the proof of Theorem \ref{thmintro} and obtain Corollary \ref{corintro}. In Section \ref{Apps}, we discuss the applications of main results and show some examples. In the end, we discuss the $p$-independency of the left-hand side of the conjecture in the Section \ref{appendixA}.

\subsection*{Acknowledgements}
    The author would like to thank Jiawen Zhang and Dapeng Zhou for providing many valuable suggestions and comments.

\section{Preliminaries} \label{Preliminaries}
    In this section, We briefly recall some facts about $\ell^p$ coarse Baum-Connes conjecture for $1\leq q <\infty$. We refer the reader to \cite{ChungNowakPCBC, ZZ} for more details. \par
    Let $X$ be a proper metric space, i.e. every closed ball in $X$ is compact. The proper metric space is a separable space, since compact metric space is separable. Choose a countable dense subset $Z_X$ in $X$, then there is a natural action $\rho$ of $Bol(X)$ on $\ell^p$-space $\ell^p(Z_X)\otimes \ell^p=\ell^p(Z_X, \ell^p)$ by point-wise multiplication, where $Bol(X)$ is the Banach algebra of all bounded Borel functions on $X$ and $\ell^p$ is the $\ell^p$-space of all $p$-summable sequences on the non-negative integers $\mathbb{N}$. In what follows, We will omit $\rho$ if there is no ambiguity and let $\chi_U$ be the characteristic function on subset $U$ of $X$, and let $p\in [1,\infty)$.

\begin{definition}
    Let $X$ and $Y$ be two proper metric spaces, $Z_X$ and $Z_Y$ be two countable dense subsets of $X$ and $Y$, respectively. Consider a bounded linear operator $T: \ell^p(Z_X)\otimes \ell^p \rightarrow \ell^p(Z_Y)\otimes \ell^p$, define
\begin{enumerate}
\item the \textit{support} of $T$, denoted $\mathrm{supp(T)}$, consists of all points $(x,y)$ in $X\times Y$ such that $\chi_V T \chi_U\neq 0$ for all open neighborhoods $U$ of $x$ and $V$ of $y$.
\item the \textit{propagation} of $T$, denoted $\mathrm{prop(T)}$, is defined to be $\sup\{d(x_1,x_2): (x_1,x_2)\in \mathrm{supp(T)}\}$, here we assume $X$ and $Y$ are same. 
\item $T$ is called to be \textit{locally compact}, if $ T\chi_K$ and $\chi_{K'} T$ are compact operators for all compact subsets $K$ in $X$ and $K'$ in $Y$.
\end{enumerate}
\end{definition}

\begin{definition} \label{DefRoeAlg}
    Let $X$ be a proper metric space, the \textit{$\ell^p$ Roe algebra} of $X$, denoted $B^p(X)$, is defined to be the norm closure of the algebra of all locally compact operators acting on $\ell^p(Z_X)\otimes \ell^p$ with finite propagation.
\end{definition}

\begin{remark}
    The $\ell^p$ Roe algebra $B^p(X)$ is a Banach algebra, when $p=2$, the algebra $B^2(X)$ is a $C^{\ast}$-algebra, called \textit{Roe algebra} (refer to \cite{HigsonRoeBook}\cite{Roe88}\cite{WillettYuBook}). The $\ell^p$ Roe algebra $B^p(X)$ is non-canonically independent of the countable dense subset $Z_X$ of $X$, and its K-theory is canonically independent of $Z_X$ (refer to \cite[Corollary 2.9]{ZZ}).  
\end{remark}

\begin{definition}\label{DefCoarseMap}
    A Borel map $f$ from a proper metric space $X$ to another proper metric space $Y$ is called \textit{coarse} if 
\begin{enumerate}
\item $f$ is proper, i.e. the inverse image of any bounded set is bounded;
\item for any $R\geq 0$, there exists $S\geq 0$ such that $d(f(x),f(x'))\leq S$ for all elements $x,x'\in X$ satisfying $d(x,x')\leq R$ 
\end{enumerate}
\end{definition}

    The following lemma tell us that every coarse map induces a homomorphism between two $\ell^p$ Roe algebras, refer to \cite[Lemma 2.8]{ZZ} for the proof.

\begin{lemma} \label{coarsetohom}
    Let $f$ be a coarse map from a proper metric space $X$ to another proper metric space $Y$, then for any $\epsilon>0$, there exists an isometric operator $V_f: \ell^p(Z_X)\otimes \ell^p \rightarrow \ell^p(Z_Y)\otimes \ell^p$ and a contractive operator $V^+_f: \ell^p(Z_Y)\otimes \ell^p \rightarrow \ell^p(Z_X)\otimes \ell^p$ such that 
                 $$\mathrm{supp}(V_f)\subseteq \{(x,y)\in X\times Y: d(f(x),y)\leq \epsilon\}$$
                 $$\mathrm{supp}(V^+_f)\subseteq \{(y,x)\in Y\times X: d(f(x),y)\leq \epsilon\}.$$
    Moreover, the pair $(V_f,V^+_f)$ gives rise to a homomorphism $ad_f: B^p(X)\rightarrow B^p(Y)$ defined by
                 $$ad_f(T)=V_f T V^+_f$$
for any element $T\in B^p(X)$. \par
    And the map $(ad_f)_{\ast}$ induced by $ad_f$ on K-theory depends only on $f$, not on the choice of the pair $(V_f,V^+_f)$. 
\end{lemma}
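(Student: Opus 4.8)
The plan is to construct the operators by discretizing $f$ and then verify the three assertions of the lemma in turn. Since $Z_Y$ is dense in $Y$, for each $x\in Z_X$ I would choose a point $g(x)\in Z_Y$ with $d(f(x),g(x))\le\epsilon$, giving a map $g\colon Z_X\to Z_Y$. Writing $\ell^p(Z_X)\otimes\ell^p=\ell^p(Z_X\times\N)$ and $\ell^p(Z_Y)\otimes\ell^p=\ell^p(Z_Y\times\N)$, each fibre $g^{-1}(y)\times\N$ is countable, so I can fix an injection of it into $\{y\}\times\N\cong\N$; assembling these over $y$ yields an injection $\iota\colon Z_X\times\N\hookrightarrow Z_Y\times\N$ whose first coordinate is $g$. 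Defining $V_f$ on the standard basis by $\delta_{(x,n)}\mapsto\delta_{\iota(x,n)}$ gives an isometry of $\ell^p$-spaces (it sends the standard basis injectively into the standard basis), and by construction $\mathrm{supp}(V_f)\subseteq\{(x,y):y=g(x)\}\subseteq\{(x,y):d(f(x),y)\le\epsilon\}$. I then let $V_f^+$ be the basis map $\delta_{\iota(x,n)}\mapsto\delta_{(x,n)}$ that annihilates every basis vector outside the range of $\iota$; this is a contractive map with $V_f^+V_f=\Id$ and the stated support.

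Next I would check that $ad_f(T)=V_fTV_f^+$ defines a homomorphism into $B^p(Y)$. Contractivity is immediate from $\|V_f\|=1$ and $\|V_f^+\|\le1$, so $ad_f$ extends from the finite-propagation locally compact operators to all of $B^p(X)$, and multiplicativity follows from $V_f^+V_f=\Id$ since $V_fT_1V_f^+V_fT_2V_f^+=V_fT_1T_2V_f^+$. Finite propagation is handled by composing supports: if $\mathrm{prop}(T)\le R$ then any $(y,y')\in\mathrm{supp}(ad_f(T))$ arises from $x,x'$ with $d(f(x),y)\le\epsilon$, $d(x,x')\le R$, $d(f(x'),y')\le\epsilon$, whence $d(y,y')\le 2\epsilon+S$ with $S$ the controlled-expansion bound of $f$ at scale $R$. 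Local compactness is exactly where properness of $f$ enters: for compact $K\subseteq Y$ the support bound gives $V_f^+\chi_K=\chi_{K''}V_f^+\chi_K$ for a compact $K''\subseteq X$ (as $f^{-1}$ of a bounded set is bounded), so $ad_f(T)\chi_K=V_f(T\chi_{K''})V_f^+\chi_K$ is compact by local compactness of $T$, and the left-hand variant is symmetric.

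The main obstacle is the last claim, that $(ad_f)_\ast$ is independent of the chosen pair, because for $p\neq2$ there are no adjoints or orthogonal rotations available to connect two covering isometries. Rather than connecting $V_0$ and $V_1$ by a path, I would produce an explicit invertible intertwiner. Given admissible pairs $(V_0,V_0^+)$ and $(V_1,V_1^+)$, put $\epsilon=\max(\epsilon_0,\epsilon_1)$ and consider the matrix over the multiplier algebra of $B^p(Y)$
\[
\mathcal{U}=\begin{pmatrix} V_1V_0^+ & \Id-V_1V_1^+ \\ \Id-V_0V_0^+ & V_0V_1^+\end{pmatrix},
\]
all of whose entries have propagation $\le2\epsilon$ in $Y$. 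Using only $V_i^+V_i=\Id$ and the idempotency of $V_iV_i^+$, a direct computation shows that $\mathcal{U}$ is invertible (its inverse is obtained by exchanging $V_0\leftrightarrow V_1$) and that
\[
\mathcal{U}\begin{pmatrix} ad_{V_0}(T) & 0 \\ 0 & 0\end{pmatrix}\mathcal{U}^{-1}=\begin{pmatrix} ad_{V_1}(T) & 0 \\ 0 & 0\end{pmatrix}
\]
for every $T\in B^p(X)$.

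To conclude, I would observe that, after the corner inclusion $B^p(Y)\hookrightarrow M_2(B^p(Y))$, the homomorphisms $ad_{V_0}$ and $ad_{V_1}$ differ by conjugation by the invertible multiplier $\mathcal{U}$, and that inner conjugation acts as the identity on topological K-theory: for an idempotent $e$ the classes $[e]$ and $[\mathcal{U}e\mathcal{U}^{-1}]$ are similar, hence equal, since $\mathrm{diag}(\mathcal{U},\mathcal{U}^{-1})$ is joined to $\Id$ in $GL_2$ by the Whitehead-lemma factorisation. Combined with the standard isomorphism $K_\ast(M_2(B^p(Y)))\cong K_\ast(B^p(Y))$, this yields $(ad_{V_0})_\ast=(ad_{V_1})_\ast$. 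The crux, and the only genuinely $p$-sensitive point, is thus replacing the Hilbert-space rotation argument by the purely algebraic $2\times2$ intertwiner together with the homotopy-free fact that inner automorphisms are trivial on K-theory.
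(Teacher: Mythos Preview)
Your argument is correct. The paper does not actually prove this lemma; it merely cites \cite[Lemma~2.8]{ZZ}, so there is no in-paper proof to compare against line by line. That said, your construction of $V_f$ and $V_f^+$ via the discretization $g$ and the fibrewise injection $\iota$ is the standard one, and your verifications of multiplicativity (via $V_f^+V_f=\Id$), finite propagation, and local compactness (using properness of $f$) are all sound.

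The one place where your write-up has genuine content beyond routine bookkeeping is the $K$-theory independence, and here your approach is worth highlighting. The usual $p=2$ argument connects two covering isometries by a rotation path of unitaries, which is unavailable on $\ell^p$. You sidestep this entirely with the algebraic $2\times 2$ intertwiner
\[
\mathcal{U}=\begin{pmatrix} V_1V_0^+ & \Id-V_1V_1^+ \\ \Id-V_0V_0^+ & V_0V_1^+\end{pmatrix},
\]
using only the relations $V_i^+V_i=\Id$ and idempotency of $V_iV_i^+$; the computation that $\mathcal{U}$ is invertible with inverse obtained by swapping indices, and that it conjugates $\mathrm{diag}(ad_{V_0}(T),0)$ to $\mathrm{diag}(ad_{V_1}(T),0)$, goes through exactly as you indicate. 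Since all four entries of $\mathcal{U}$ have propagation at most $2\epsilon$, $\mathcal{U}$ lies in $\GL_2$ of the multiplier algebra of $B^p(Y)$, and the Whitehead-lemma argument (writing $\mathrm{diag}(\mathcal{U},\mathcal{U}^{-1})$ as a product of elementary matrices with multiplier entries) shows that conjugation by $\mathcal{U}$ is trivial on $K$-theory. This is in fact the same device the paper itself deploys later, in the proof of Theorem~\ref{mainthm}, where an analogous matrix $W$ is used to show that the corner embedding $\psi_\ast$ is an isomorphism; so your approach is very much in the spirit of the paper even though the paper outsources the present lemma.
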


\begin{definition} \label{deflocalg}
    Let $X$ be a proper metric space, the \textit{$\ell^p$ localization algebra} of $X$, denoted $B^p_L(X)$, is defined to be the norm closure of the algebra of all bounded and uniformly norm-continuous functions $u$ from $[0,\infty)$ to $B^p(X)$ such that
                $$\mathrm{prop}(u(t))\rightarrow 0, t\rightarrow \infty$$ 
\end{definition}

    Let $f$ be a uniformly continuous coarse map from a proper metric space $X$ to another proper metric space $Y$, $\{\epsilon_k\}$ be a decreasing sequence of positive numbers satisfying $\epsilon_k\rightarrow 0$ as $k\rightarrow \infty$. By the Lemma \ref{coarsetohom}, for each $\epsilon_k$, there exists an isometric operator $V_k$ and a contractive operator $V^+_k$, then for $t\in[0,\infty)$, define
                $$V_f(t)=R(t-k)(V_k\oplus V_{k+1})R^{\ast}(t-k)$$
                $$V^+_f(t)=R(t-k)(V^+_k\oplus V^+_{k+1})R^{\ast}(t-k)$$
for all $k\leq t\leq k+1$, where 
                $$R(t)=\left(
                          \begin{array}{cc}
                              cos(\pi t/2)  & sin(\pi t/2)\\
                              -sin(\pi t/2) & cos(\pi t/2)\\
                          \end{array}
                        \right).      
                $$

    Similar to Lemma \ref{coarsetohom}, we have the following lemma, refer to \cite[Lemma 2.21]{ZZ} for the proof.
\begin{lemma} \label{coarsetohom2}
    Let $f$ and $\{\epsilon_k\}$ be as above, then the pair $(V_f(t),V^+_f(t))$ induces a homomorphism $Ad_f$ from $B^p_L(X)$ to $B^p_L(Y)\otimes M_2(\mathbb{C})$ defined by 
                $$Ad_f(u)(t)=V_f(t)(u(t)\oplus 0)V^+_f(t)$$
for any element $u\in B^p_L(X)$ and $t\in [0,\infty)$, such that
                $$\mathrm{prop}(Ad_f(u)(t))\leq \sup_{(x,x')\in \mathrm{supp}(u(t))}d(f(x),f(x'))+4\epsilon_k$$
for all $t\in [k,k+1]$.
    Moreover, the induced map $(Ad_f)_{\ast}$ on K-theory depends only on $f$ and not on the choice of the pair $(V_f(t),V^+_f(t))$.
\end{lemma}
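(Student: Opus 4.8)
The plan is to mirror the proof of Lemma \ref{coarsetohom} one level up, at the level of the $t$-parametrized families defining the localization algebra, treating the rotation matrix $R(t)$ as the device that continuously interpolates between the operators attached to consecutive scales $\epsilon_k$ and $\epsilon_{k+1}$. First I would isolate the two algebraic identities that make the construction run. Since $R(t)$ is a rotation, the relation $R^{\ast}(t)R(t)=R(t)R^{\ast}(t)=I$ holds as an operator identity on the $\ell^p$-sum for \emph{every} $p$ (the entries obey $\cos^2+\sin^2=1$, so no $\ell^p$-geometry is involved), and $\|R(t)\|$ is bounded uniformly in $t$. Combined with the relation $V^+_kV_k=I$ supplied by Lemma \ref{coarsetohom} (this is precisely what makes each $ad_f$ multiplicative), a block computation gives $V^+_f(t)V_f(t)=R(t-k)(V^+_kV_k\oplus V^+_{k+1}V_{k+1})R^{\ast}(t-k)=I$ on each interval $[k,k+1]$. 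Multiplicativity of $Ad_f$ is then immediate: inserting $V^+_f(t)V_f(t)=I$ between the two factors collapses $Ad_f(u)(t)Ad_f(v)(t)$ to $V_f(t)(u(t)v(t)\oplus 0)V^+_f(t)=Ad_f(uv)(t)$, and linearity is clear.

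Next I would verify that $Ad_f(u)$ lands in $B^p_L(Y)\otimes M_2(\mathbb{C})$. Boundedness and uniform norm-continuity of $t\mapsto Ad_f(u)(t)$ on each open interval $(k,k+1)$ follow because $R(t)$ depends uniformly norm-continuously on $t$, $u$ is uniformly norm-continuous, and $\|V_f(t)\|,\|V^+_f(t)\|$ are uniformly bounded. The one point that requires care is continuity across the integers: although $V_f(t)$ itself jumps at $t=k$ (its lower-right block switches between $V_{k-1}$ and $V_{k+1}$), conjugating $u(t)\oplus 0$ only sees the upper-left block, which equals $V_k$ from both sides at $t=k$. Hence $Ad_f(u)(k)=(V_k\,u(k)\,V^+_k)\oplus 0$ computed from either side, so $Ad_f(u)$ is genuinely continuous on $[0,\infty)$.

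For the support estimate I would first observe that each block of $V_f(t)$ is a scalar combination of $V_k$ and $V_{k+1}$; since $\{\epsilon_k\}$ is decreasing, every block has support inside $\{(x,y):d(f(x),y)\leq\epsilon_k\}$ for $t\in[k,k+1]$, and symmetrically for $V^+_f(t)$. Writing $Ad_f(u)(t)$ entrywise as a triple product and composing the three support relations, any $(y,y')\in\mathrm{supp}(Ad_f(u)(t))$ arises from $x,x'$ with $d(f(x),y)\leq\epsilon_k$, $(x,x')\in\mathrm{supp}(u(t))$, and $d(f(x'),y')\leq\epsilon_k$, whence $d(y,y')\leq\sup_{(x,x')\in\mathrm{supp}(u(t))}d(f(x),f(x'))+2\epsilon_k$, which is a fortiori within the asserted $4\epsilon_k$. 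In particular the propagation is finite, and as $t\to\infty$ both $\epsilon_k\to 0$ and the supremum term tends to $0$: indeed $\mathrm{prop}(u(t))\to 0$ by definition of $B^p_L(X)$, and because $f$ is assumed \emph{uniformly continuous}, small $d(x,x')$ forces small $d(f(x),f(x'))$. I expect this uniform-continuity hypothesis to be the crucial and easily overlooked ingredient, since coarse maps alone do not control small scales; without it the propagation would not decay and $Ad_f(u)$ would fail to lie in $B^p_L(Y)\otimes M_2(\mathbb{C})$.

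Finally, for the independence of $(Ad_f)_{\ast}$ from the chosen pair, I would run the standard rotation-homotopy argument, which I expect to be the most delicate step to write out carefully although conceptually routine. Given two admissible pairs, I would assemble them into a single family over each $[k,k+1]$ by interpolating through an auxiliary rotation, producing a homotopy of homomorphisms $B^p_L(X)\to B^p_L(Y)\otimes M_2(\mathbb{C})$ through maps of the same form; since homotopic homomorphisms induce the same map on K-theory, $(Ad_f)_{\ast}$ is well-defined. This is exactly the localization-algebra analogue of the final assertion of Lemma \ref{coarsetohom}, and the verification that each intermediate family again satisfies the propagation and local-compactness constraints proceeds as in the steps above.
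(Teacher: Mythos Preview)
The paper does not supply its own proof of this lemma; it simply refers to \cite[Lemma 2.21]{ZZ}. Your sketch is a correct reconstruction of that standard argument: the algebraic identity $V^+_f(t)V_f(t)=I$ (via $R^{\ast}R=I$ and $V_k^+V_k=I$) gives multiplicativity, the rotation structure makes $Ad_f(u)$ continuous across the integers because only the upper-left block survives, the block decomposition of $V_f(t)$ yields the propagation bound (your $2\epsilon_k$ is in fact sharper than the stated $4\epsilon_k$), uniform continuity of $f$ forces the propagation to decay, and the rotation homotopy handles independence on $K$-theory. The only item you leave implicit is local compactness of $Ad_f(u)(t)$, which follows routinely from properness of $f$ together with the support control on $V_f(t)$, $V_f^+(t)$; otherwise nothing is missing.
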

    
    Now we are ready to formulate the $\ell^p$ coarse Baum-Connes conjecture for $p\in[1,\infty)$. Let $X$ be a proper metric space, consider the evaluation-at-zero homomorphism:
    $$e_0:B_L^p(X)\rightarrow B^p(X)$$
which induces a homomorphism on $K$-theory:
    $$e_0:K_*(B_L^p(X))\rightarrow K_*(B^p(X))$$

    Let $C$ be a locally finite and uniformly bounded cover for  $X$. The \textit{nerve space} $N_C$ associated to $C$ is defined to be the simplicial complex whose set of vertices equals $C$ and where a finite subset $\{U_0,\ldots,U_n\}\subseteq C$ spans an $n$-simplex in $N_C$ if and only if $\bigcap_{i=0}^n U_i \not=\emptyset$. Endow $N_C$ with the \textit{spherical metric}, i.e. the path metric whose restriction to each simplex $\{U_0,\cdots, U_n\}$ is given by
    $$d(\sum^n_{i=0}t_iU_i, \sum^n_{i=0}s_iU_i)=d_{S^n}(\{\frac{t_j}{(\sum_it^2_i)^{1/2}}\}^n_{j=0}, \{\frac{s_j}{(\sum_is^2_i)^{1/2}}\}^n_{j=0}),$$
where $d_{S^n}$ is the standard Riemannian metric on the unit $n$-sphere. The distance of two points which in different connected components is defined to be $\infty$ by convention. \par

\begin{definition}(\cite{RoeBookCoarseCohomology})
    A sequence of locally finite and uniformly bounded covers $\{C_k\}$ of metric space $X$ is called an \textit{anti-\v Cech system} of $X$, if there exists a sequence of positive numbers $R_k\rightarrow \infty$ such that for each $k$, 
\begin{enumerate}
\item every set $U$ in $C_k$ has diameter less than $R_k$;
\item any subset of diameter less than $R_k$ in $X$ is contained in some member of $C_{k+1}$.
\end{enumerate}
\end{definition}

    An anti-\v Cech system always exists (refer to \cite[Lemma 3.15]{RoeBookCoarseCohomology}). By the property of the anti-\v Cech system, for every pair $k_2>k_1$, there exists a simplicial map $i_{k_1 k_2}$ from $N_{C_{k_1}}$ to $N_{C_{k_2}}$ such that $i_{k_1k_2}$ maps a simplex $\{U_0,\ldots,U_n\}$ in $N_{C_{k_1}}$ to a simplex $\{U_0',\ldots,U_n'\}$ in $N_{C_{k_2}}$ satisfying $U_i\subseteq U_i'$ for all $0\leq i\leq n$. Thus by Lemma \ref{coarsetohom},\ref{coarsetohom2}, $i_{k_1 k_2}$ gives rise to the following directed systems of groups:
    \begin{center}
    $(\mathrm{ad}_{i_{k_1k_2}})_*:K_*(B^p(N_{C_{k_1}}))\rightarrow K_*(B^p(N_{C_{k_2}}))$;\\
    $(\mathrm{Ad}_{i_{k_1k_2}})_*:K_*(B_L^p(N_{C_{k_1}}))\rightarrow K_*(B_L^p(N_{C_{k_2}}))$.
    \end{center}

    The following conjecture is called the $\ell^p$ coarse Baum-Connes conjecture.
\begin{conjecture} \label{CBC}
    Let $X$ be a proper metric space, $\{C_k\}_{k=0}^\infty$ be an anti-\v Cech system of $X$, then the evaluation-at-zero homomorphism
$$e_0:\lim_{k\rightarrow \infty} K_*(B_L^p(N_{C_k})) \rightarrow \lim_{k\rightarrow \infty} K_*(B^p(N_{C_k})) \cong K_*(B^p(X))$$
is an isomorphism.
\end{conjecture}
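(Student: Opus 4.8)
The plan is to prove that the assembly map $e_0$ is an isomorphism by a cutting-and-pasting induction that reduces the global statement to a local model over bounded pieces, where the map is transparent, and then reassembles by Mayer--Vietoris. Throughout I would work with the nerves $N_{C_k}$: because each cover $C_k$ is locally finite and uniformly bounded, each nerve is a finite-dimensional simplicial complex of bounded geometry, and the right-hand identification $\lim_k K_*(B^p(N_{C_k})) \cong K_*(B^p(X))$ is the standard coarse-invariance statement, which I would take as given. The content to be proved is therefore that $e_0$ is an isomorphism after passing to the limit in $k$.

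The local model is the base of the induction: over a piece $Y$ of bounded diameter, $B^p(Y)$ is the algebra of $\ell^p$ compact operators, $B^p_L(Y)$ computes the K-homology of $Y$, and $e_0$ is the local assembly map, which is an isomorphism when $Y$ is bounded and contractible. This should follow from an explicit homotopy built from the isometry/contraction pair $(V_f, V_f^+)$ of Lemma~\ref{coarsetohom2}, which here plays the role of the unitaries available only when $p=2$. For the inductive step I would establish Mayer--Vietoris exact sequences for $K_*(B^p_L(-))$ and $K_*(B^p(-))$ compatible with $e_0$, decompose each nerve along a bounded-geometry partition into pieces on which the local model applies together with their overlaps, and invoke the five lemma to propagate the isomorphism up the decomposition. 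Taking the inductive limit over $k$ and using that K-theory commutes with direct limits would then yield the claim.

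The hard part --- and the reason this appears as a conjecture rather than a theorem --- is that the induction does not close for an arbitrary proper metric space: there is no control on how the number of decomposition steps grows with $k$, so the five-lemma bookkeeping need not stabilize in the limit. Indeed, as recalled in the introduction, expanders are counterexamples for every $p\in(1,\infty)$, so no unconditional proof can exist, and any genuine argument must feed in a large-scale geometric hypothesis --- finite asymptotic dimension, coarse embeddability into a suitable Banach space, or (as in this paper's main theorem) a cone or coarsely-convex structure --- that either bounds the induction uniformly or supplies a Dirac/dual-Dirac homotopy directly. A second, specifically $\ell^p$ obstacle is the loss of Hilbert-space tools: operators need not be adjointable and there is no orthogonality, so every norm and propagation estimate from Yu's $p=2$ arguments must be rebuilt around the isometry/contraction pairs of Lemmas~\ref{coarsetohom} and~\ref{coarsetohom2} and controlled directly in the $\ell^p$ operator norm.
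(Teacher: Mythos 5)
You have correctly identified the essential point: the statement you were given is Conjecture~\ref{CBC} itself, which the paper does not prove and which cannot be proved unconditionally --- as you note, and as the paper records in its introduction, expanders are counterexamples for every $p\in(1,\infty)$ by the result of Chung and Nowak \cite{ChungNowakPCBC}. So declining to produce a complete proof is the right answer, and your diagnosis of \emph{why} the Mayer--Vietoris induction fails to close (no uniform control on the decomposition complexity as $k\to\infty$) is accurate; this is precisely the obstruction that hypotheses such as finite asymptotic dimension are designed to remove, and the route you sketch --- local model over bounded contractible pieces, Mayer--Vietoris sequences compatible with $e_0$, five lemma, limit over $k$ --- is faithfully the Yu-style strategy carried out in the $\ell^p$ setting in \cite{ZZ, YuFAD}.

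One comparison worth making explicit: where this paper \emph{does} establish an instance of the conjecture (Theorem~\ref{mainthm}, for open cones), it does not use the cutting-and-pasting induction you describe at all. Instead it first reduces, via Lemma~\ref{Thm:VanishingObstruction} and Lemma~\ref{SimplifyCBC}, to showing that the obstruction group $K_{\ast}(B^p_{L,0}(\mathcal{O}M))$ vanishes, and then kills it by an Eilenberg swindle: the scaling map $f(t,x)=(\tfrac{t}{2},x)$ is coarsely homotopic to the identity (Lemma~\ref{SameHom}), the conjugation operators $V,V^+$ implementing $f$ have $V^+V=VV^+=I$, and the infinite direct sum $\phi(u)(s)=0\oplus Vu(s)V^+\oplus\cdots\oplus V^{n+1}u(s-n)(V^+)^{n+1}\oplus\cdots$ has vanishing propagation by construction, yielding $\phi_{\ast}+\psi_{\ast}=\phi_{\ast}$ and hence $\psi_{\ast}=0$ for the (invertible) corner embedding $\psi$. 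This ``coarse contractibility'' mechanism supplies exactly the large-scale geometric input you correctly observe must be fed in from outside, but it does so via a homotopy/swindle rather than by bounding an induction --- the two known strategies are genuinely different, and your proposal describes the one \emph{not} used in this paper. Your only loose phrasing is in the local model: for a single bounded piece $Y$ the claim that $e_0$ is an isomorphism needs $Y$ uniformly contractible (or one works simplex-by-simplex on the nerve, as in \cite{ZZ}), not merely bounded and contractible; but since you never rely on this to assert the conjecture, it does not affect your (correct) conclusion.
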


\begin{remark}
    In the above conjecture, every nerve space $N_{C_k}$ is coarse equivalent to $X$, i.e. there exist two coarse maps $f:N_{C_k}\rightarrow X$ and $g: X\rightarrow N_{C_k}$ such that $d(gf(y),id_{N_{C_k}}(y))\leq R$ and $d(fg(x),id_{X}(x))\leq R$ for some constant $R$, then by Lemma \ref{coarsetohom}, it is not difficult to prove that $\lim_{k\rightarrow \infty} K_*(B^p(N_{C_k})) \cong K_*(B^p(X))$. Moreover, the $\ell^p$ coarse Baum-Connes conjecture for $X$ does not depend on the choice of the anti-\v Cech system. When $p=2$, the above conjecture is the known coarse Baum-Connes conjecture and has important applications to topology and geometry (refer to \cite{HigsonRoeBook}\cite{WillettYuBook}). 
\end{remark}

    Let $B_{L,0}^p(X)=\{u\in B_L^p(X):u(0)=0\}$. There exists an exact sequence:
    $$0 \rightarrow B_{L,0}^p(X) \rightarrow B_L^p(X) \rightarrow B^p(X) \rightarrow 0$$
    Thus by six-term exact sequence in K-theory, we have the following reduction:
\begin{lemma}\label{Thm:VanishingObstruction}
    Let $X$ be a proper metric space, $\{C_k\}_{k=0}^\infty$ be an anti-\v Cech system of $X$, then the $\ell^p$ coarse Baum-Connes conjecture is true if and only if
         $$\lim_{k\rightarrow \infty} K_*(B_{L,0}^p(N_{C_k}))=0$$
\end{lemma}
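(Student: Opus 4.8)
The plan is to feed the short exact sequence
$$0 \rightarrow B_{L,0}^p(X) \rightarrow B_L^p(X) \xrightarrow{e_0} B^p(X) \rightarrow 0$$
displayed above (whose surjectivity I take as given) into the $K$-theory functor, pass to the inductive limit over the anti-\v Cech system, and then perform a diagram chase on the resulting cyclic six-term exact sequence. First I would fix $X$ and note that $B_{L,0}^p(X)$ is by definition the kernel of the evaluation-at-zero map $e_0$, so applying $K$-theory produces the six-term exact sequence whose two horizontal maps emanating from $K_i(B_L^p(X))$ are induced by $e_0$ and whose flanking terms are $K_0(B_{L,0}^p(X))$ and $K_1(B_{L,0}^p(X))$.

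Next I would replace $X$ by each nerve space $N_{C_k}$ and observe that the simplicial maps $i_{k_1k_2}$ induce, through Lemmas \ref{coarsetohom} and \ref{coarsetohom2}, a morphism of the short exact sequences, hence a morphism of the associated six-term sequences; in this way the six-term sequences assemble into a directed system indexed by $k$. Passing to the inductive limit and using that $K$-theory is continuous for inductive limits of Banach algebras together with the exactness of filtered colimits of abelian groups, I obtain the six-term exact sequence
$$\cdots \rightarrow \lim_k K_i(B_{L,0}^p(N_{C_k})) \rightarrow \lim_k K_i(B_L^p(N_{C_k})) \xrightarrow{(e_0)_*} \lim_k K_i(B^p(N_{C_k})) \rightarrow \cdots$$
in which the distinguished map $(e_0)_*$ is precisely the evaluation-at-zero homomorphism appearing in Conjecture \ref{CBC}.

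Finally I would run the standard chase on this cyclic exact sequence. The $\ell^p$ coarse Baum--Connes conjecture asserts exactly that $(e_0)_*$ is an isomorphism in both degrees $i=0,1$, and in a cyclic six-term exact sequence a designated map is an isomorphism in both degrees if and only if the two flanking terms vanish: if $\lim_k K_*(B_{L,0}^p(N_{C_k}))=0$ the sequence breaks into two short pieces forcing $(e_0)_*$ to be an isomorphism, while conversely surjectivity of $(e_0)_*$ annihilates the image of each connecting map and injectivity forces each connecting map to be zero, so both flanking terms vanish. This yields the claimed equivalence. The only delicate point is the interchange of $K$-theory with the inductive limit and the exactness of the limit functor, but both are standard for directed systems of Banach algebras and of abelian groups, so I expect no genuine obstacle beyond bookkeeping the naturality of the six-term sequence.
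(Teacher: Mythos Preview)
Your proposal is correct and follows essentially the same approach as the paper, which simply invokes the short exact sequence $0 \rightarrow B_{L,0}^p(X) \rightarrow B_L^p(X) \rightarrow B^p(X) \rightarrow 0$ and the resulting six-term exact sequence in $K$-theory. One small remark: the limits in Conjecture~\ref{CBC} are already taken at the level of abelian groups, so you only need exactness of filtered colimits of abelian groups, not continuity of $K$-theory for inductive limits of Banach algebras.
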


    The following lemma tells us that the left-hand side of the $\ell^p$ coarse Baum-Connes conjecture is independent of $p$, we will prove it in the Section \ref{appendixA}.
\begin{lemma} \label{indepofloc}
    Let $X$ be a proper metric space, then for any $p\in [1,\infty)$, the group $K_{\ast}(B^p_L(X))$ is isomorphic to the group $K_{\ast}(B^2_L(X))$, i.e. the group $K_{\ast}(B^p_L(X))$ does not depend on $p\in [1,\infty)$.
\end{lemma}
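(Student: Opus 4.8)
The plan is to realise $K_*(B^p_L(X))$ as the value of a homology-type functor whose coefficients are visibly independent of $p$, and to compare it with the case $p=2$ through a natural transformation that is an isomorphism on coefficients. I would first record the three structural properties of $Y\mapsto K_*(B^p_L(Y))$ on which everything rests, each holding for \emph{every} $p\in[1,\infty)$ by the same argument as for $p=2$: \textbf{(i)} strong Lipschitz homotopy invariance, obtained by transporting a Lipschitz homotopy to a norm-continuous family of the isometries and contractions furnished by Lemma \ref{coarsetohom2}; \textbf{(ii)} a Mayer--Vietoris six-term exact sequence attached to a closed decomposition $Y=A\cup B$, arising from the ideal inclusion $B^p_L(A\cap B)\hookrightarrow B^p_L(A)\oplus B^p_L(B)\to B^p_L(Y)$ through excision in Banach-algebra $K$-theory; and \textbf{(iii)} the coefficient computation for a point, where the condition $\mathrm{prop}(u(t))\to 0$ is vacuous and $B^p_L(\mathrm{pt})$ is assembled functorially from the interval $[0,\infty)$ and the compact operators $\mathcal{K}^p$ on $\ell^p$, so that $K_*(B^p_L(\mathrm{pt}))$ is manifestly independent of $p$ because $\mathcal{K}^p$ has $p$-independent $K$-theory. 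Property (iii) is the only place where the $\ell^p$-geometry enters, and it is precisely where the dependence on $p$ evaporates.

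The comparison itself I would set up as a natural transformation $\tau^p$ from (locally finite) $K$-homology into $K_*(B^p_L(-))$ --- an $\ell^p$ local index map --- which I would construct for all $p$ and which, by (iii), is an isomorphism on a point. I would then prove $\tau^p$ is an isomorphism on all finite-dimensional simplicial complexes, and in particular on the nerves $N_{C_k}$, by a cutting induction: peel off the top-dimensional simplices one at a time with the Mayer--Vietoris sequence of (ii), contract each simplex and each intersection to a point using the homotopy invariance of (i), and close the induction with the five lemma, whose hypotheses hold because $\tau^p$ is an isomorphism on the point coefficient. Since the source of $\tau^p$ is the $p$-independent group $K^{\mathrm{lf}}_*(Y)$, this yields $K_*(B^p_L(N_{C_k}))\cong K^{\mathrm{lf}}_*(N_{C_k})\cong K_*(B^2_L(N_{C_k}))$, recovering and extending \cite{ZZ} to all $p\in[1,\infty)$. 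For an arbitrary proper metric space $X$ I would exhaust $X$ by relatively compact subsets and use continuity of $K$-theory under the induced inductive limit of localization algebras to reduce to compact metric spaces, which are then handled through their small-scale anti-\v Cech nerves, returning to the simplicial case.

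The hardest part is step (ii) in the Banach, non-self-adjoint setting. I must verify genuine excision for the ideal $B^p_L(A\cap B)$ and, more delicately, choose the decomposition by cutting with $\chi_A$ and $\chi_B$ so that the pieces stay inside the localization algebras \emph{and} the resulting boundary maps have propagation tending to $0$; this forces a partition-of-unity smoothing of the cut, and it is exactly the propagation bookkeeping that a careless decomposition destroys. A second delicate point is the construction of the transformation $\tau^p$ over $\ell^p$, where one cannot rely on Hilbert-space functional calculus to produce finite-propagation index families; here I would build $\tau^p$ only from the isometries and contractions of Lemma \ref{coarsetohom} and Lemma \ref{coarsetohom2}, which are available for every $p\in[1,\infty)$, thereby also covering the endpoint $p=1$, where the reflexivity and duality of $\ell^p$ used for $p\in(1,\infty)$ in \cite{ZZ} are no longer at hand.
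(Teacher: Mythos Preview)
Your outline shares its skeleton with the paper---both treat $Y\mapsto K_*(B^p_L(Y))$ as a homology-type functor, reduce to finite simplicial complexes, and finish with the five lemma over a natural transformation that is checked on a point---but the paper's comparison mechanism is genuinely different from yours and sidesteps precisely the difficulty you flag as hardest. Rather than constructing an $\ell^p$ local index map $\tau^p:K^{\mathrm{lf}}_*\to K_*(B^p_L(-))$, the paper introduces an intermediate \emph{dual} $\ell^p$ localization algebra $B^{p,*}_L(X)$, built from operators that are simultaneously bounded on $\ell^p$ and on $\ell^q$ (with $q$ the conjugate exponent for $p>1$, and $q=3$ for $p=1$). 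This algebra carries an inclusion $i:B^{p,*}_L(X)\to B^p_L(X)$ and, by Riesz--Thorin interpolation, a contractive homomorphism $\varphi:B^{p,*}_L(X)\to B^2_L(X)$. The comparison is then purely Banach-algebraic: one shows $i_*$ and $\varphi_*$ are isomorphisms, first on finite simplicial complexes (citing \cite{ZZ}), then in general. No $K$-homology cycles, no finite-propagation functional calculus over $\ell^p$, and no special treatment of the endpoint $p=1$ beyond the choice of the auxiliary exponent.

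Your proposal has a gap at exactly this point. You say you would ``build $\tau^p$ only from the isometries and contractions of Lemma~\ref{coarsetohom} and Lemma~\ref{coarsetohom2}'', but those lemmas produce covering isometries for coarse maps between spaces; they do not by themselves take a $K$-homology class to an element of $K_*(B^p_L(X))$. Without a concrete construction of $\tau^p$ on cycles, your argument does not get off the ground, and the interpolation device of the paper is precisely what fills that hole. A second, smaller divergence: for the passage from general spaces to simplicial ones the paper does \emph{not} exhaust by compacta and take a direct limit. It first passes to the one-point compactification $X^+$ (using $K_*(B^p_L(X))\cong K_*(B^p_L(X^+))/K_*(B^p_L(\{\infty\}))$), then writes the compact metric space as an \emph{inverse} limit of nerves and invokes the Steenrod axioms together with the Milnor $\varprojlim{}^{1}$ exact sequence; your nerve step would need this $\varprojlim{}^{1}$ analysis, which you do not mention.
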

 
   Combine the above lemma with the $\ell^p$ coarse Baum-Connes conjecture, we have the following result concerning the $p$-independency of $K$-theory for $\ell^p$ Roe algebras.
    
\begin{corollary} \label{indepRoe}
    Let $X$ be a proper metric space. If for all $p\in [1,\infty)$, the $\ell^p$ coarse Baum-Connes conjecture is true for $X$, then the K-theory of $\ell^p$ Roe algebra $K_{\ast}(B^p(X))$ does not depend on $p\in [1,\infty)$.
\end{corollary}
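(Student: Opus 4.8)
The plan is to use the hypothesis to trade the $K$-theory of the $\ell^p$ Roe algebra for the left-hand side of the conjecture, which Lemma \ref{indepofloc} already controls. Fix an anti-\v Cech system $\{C_k\}_{k=0}^\infty$ of $X$. Since the $\ell^p$ coarse Baum-Connes conjecture is assumed to hold for every exponent in $[1,\infty)$, and $2\in[1,\infty)$, I may invoke Conjecture \ref{CBC} both at the given $p$ and at the exponent $2$. In each case the evaluation-at-zero map $e_0$ is an isomorphism, so I obtain
\[
K_*(B^p(X))\;\cong\;\lim_{k\to\infty}K_*(B^p_L(N_{C_k})),\qquad
K_*(B^2(X))\;\cong\;\lim_{k\to\infty}K_*(B^2_L(N_{C_k})).
\]

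Next I would compare the two inductive limits on the right. Each nerve space $N_{C_k}$ is itself a proper metric space, so Lemma \ref{indepofloc} supplies isomorphisms $\varphi_k\colon K_*(B^p_L(N_{C_k}))\xrightarrow{\;\cong\;}K_*(B^2_L(N_{C_k}))$ for every $k$. To deduce that the limits agree, it suffices to check that the $\varphi_k$ are compatible with the structure maps of the two directed systems, namely the maps $(\mathrm{Ad}_{i_{k_1k_2}})_*$ induced by the simplicial maps $i_{k_1k_2}\colon N_{C_{k_1}}\to N_{C_{k_2}}$ of Lemma \ref{coarsetohom2}. Granting this compatibility, the $\varphi_k$ assemble into an isomorphism of inductive limits
\[
\lim_{k\to\infty}K_*(B^p_L(N_{C_k}))\;\cong\;\lim_{k\to\infty}K_*(B^2_L(N_{C_k})),
\]
and chaining the three isomorphisms yields $K_*(B^p(X))\cong K_*(B^2(X))$, which is exactly the asserted $p$-independence.

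The main obstacle is precisely this compatibility, i.e.\ the naturality of Lemma \ref{indepofloc} with respect to the homomorphisms induced by coarse (here simplicial) maps. I expect this to be read off from the construction carried out in Section \ref{appendixA}: the isomorphism $K_*(B^p_L(-))\cong K_*(B^2_L(-))$ obtained there should be realized through canonical comparison maps between the localization algebras that are functorial under $\mathrm{Ad}_f$, so that the squares
\[
\begin{CD}
K_*(B^p_L(N_{C_{k_1}})) @>{\varphi_{k_1}}>> K_*(B^2_L(N_{C_{k_1}}))\\
@V{(\mathrm{Ad}_{i_{k_1k_2}})_*}VV @VV{(\mathrm{Ad}_{i_{k_1k_2}})_*}V\\
K_*(B^p_L(N_{C_{k_2}})) @>{\varphi_{k_2}}>> K_*(B^2_L(N_{C_{k_2}}))
\end{CD}
\]
commute. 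Once naturality is in hand, no further argument is needed; in fact the same reasoning shows that the left-hand side of the conjecture, $\lim_k K_*(B^p_L(N_{C_k}))$, is itself $p$-independent, which is the conceptual reason the Roe-algebra $K$-theory inherits $p$-independence as soon as the conjecture holds for all $p$.
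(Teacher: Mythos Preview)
Your proposal is correct and follows exactly the approach the paper intends: the paper states the corollary immediately after Lemma~\ref{indepofloc} with only the sentence ``Combine the above lemma with the $\ell^p$ coarse Baum-Connes conjecture,'' and gives no further argument. You are simply more careful than the paper in isolating the naturality of the isomorphisms $K_*(B^p_L(-))\cong K_*(B^2_L(-))$ under the connecting maps $(\mathrm{Ad}_{i_{k_1k_2}})_*$; this naturality does indeed follow from the construction in Section~\ref{appendixA}, since the comparison maps $i$ and $\varphi$ there are induced by algebra homomorphisms and the covering isometries $V_f$, $V^+_f$ of Lemma~\ref{coarsetohom2} act simultaneously on all the $\ell^p$-spaces involved.
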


\section{Main Results} \label{MainResults}
    In this section, we will prove that the $\ell^p$ coarse Baum-Connes conjecture holds for open cones, thus the K-theory of $\ell^p$ Roe algebras of open cones is independent of $p\in [1,\infty)$. The idea of the proof comes from \cite{HigsonRoeCBC}. Firstly, we recall a concept in coarse geometry.
    
\begin{definition} \label{DefCoarHom}
    Let $f,g: X \rightarrow Y$ be two coarse maps between proper metric spaces, $f$ and $g$ are called to be \textit{coarsely homotopic}, if there exists a metric subspace $Z=\{(x,t): 0\leq t\leq t_x\}$ of $X\times \mathbb{R}$ and a coarse map $h: Z\rightarrow Y$, such that 
    \begin{enumerate}
        \item the map from $X$ to $\mathbb{R}$ given by $x\mapsto t_x$ satisfies that for any $R\geq 0$, there exists $S\geq 0$ such that $|t_{x}-t_{x'}|\leq S$ for all elements $x,x'\in X$ with $d(x,x')\leq R$;
        \item $h(x,0)=f(x)$;
        \item $h(x,t_x)=g(x)$.
    \end{enumerate}
    The map $f$ is called a \textit{coarse homotopy equivalence map} if there exists a coarse map $f':Y\rightarrow X$ such that $f'f$ and $ff'$ are coarsely homotopic to the identities $id_X$ and $id_Y$, respectively. Call $X$ and $Y$ are \textit{coarsely homotopy equivalent} if there exists a coarse homotopy equivalence map from $X$ to $Y$.
\end{definition}

    The following lemma implies that the $\ell^p$ coarse Baum-Connes conjecture is permanent under the coarse homotopy equivalence. The proof is relied on Mayer-Vietoris principle, refer to \cite[Proposition 12.4.12]{HigsonRoeBook} for more details.
\begin{lemma}\label{CoaHomCBC}
    For any $p\in[1,\infty)$, let $X$, $Y$ be two proper metric spaces and $f,g: X\rightarrow Y$ be two coarse maps, let $KX^p_{\ast}(X)$ and $KX^p_{\ast}(Y)$ represent the left side of the $\ell^p$ coarse Baum-Connes conjecture (i.e. Conjecture \ref{CBC}) for $X$ and $Y$, respectively. If $f$ is coarsely homotopic to $g$, then they induce same homomorphisms: $f_{\ast}=g_{\ast}: KX^p_{\ast}(X)\rightarrow KX^p_{\ast}(Y)$ and $(ad_f)_{\ast}=(ad_g)_{\ast}:K_{\ast}(B^p(X))\rightarrow K_{\ast}(B^p(Y))$. Moreover, if $f$ is a coarse homotopy equivalence map, then we have the following commutative diagram and two vertical homomorphisms are isomorphisms
    \begin{displaymath}
      \xymatrix{
                KX^p_{\ast}(X)\ar[r]^{e_0\:\:\:\:}\ar[d]_{f_{\ast}}^{\cong} & K_{\ast}(B^p(X))\ar[d]_{(ad_f)_{\ast}}^{\cong} \\
                KX^p_{\ast}(Y)\ar[r]^{e_0\:\:\:\:}                                   & K_{\ast}(B^p(Y))
               }
    \end{displaymath}
\end{lemma}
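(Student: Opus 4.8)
The plan is to establish coarse homotopy invariance of the $\ell^p$ coarse Baum-Connes conjecture by mimicking the classical $C^\ast$-algebraic argument from \cite[Proposition 12.4.12]{HigsonRoeBook}, but with all Hilbert-space constructions replaced by their $\ell^p$ analogues. The core claim splits into two parts: first, that coarsely homotopic maps $f,g$ induce the same map on each side of the conjecture, and second, that a coarse homotopy equivalence induces isomorphisms on both sides fitting into the displayed commutative square. I would treat the first part as the substantive content and derive the second as a formal consequence.

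Let me sketch the substantive part.

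\begin{proof}[Proof sketch]
First I would reduce to showing that a coarse homotopy $h\colon Z\to Y$ (with $Z=\{(x,t):0\le t\le t_x\}$) induces, at the level of $K$-theory, a single map that agrees with both $(ad_f)_*$ and $(ad_g)_*$. The two inclusions $\iota_0,\iota_1\colon X\to Z$ given by $x\mapsto(x,0)$ and $x\mapsto(x,t_x)$ are coarse maps by hypothesis (1), and $h\circ\iota_0=f$, $h\circ\iota_1=g$ by (2) and (3). So the whole statement reduces to showing that $\iota_0$ and $\iota_1$ induce the \emph{same} map on $K_*(B^p(\cdot))$ and on $KX^p_*(\cdot)$. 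The standard way to see this is a Mayer–Vietoris / telescoping argument on $Z$: decompose $Z$ along the ``height'' coordinate, observe that $\iota_0(X)$ and $\iota_1(X)$ are both \emph{coarsely dense} in the respective end-faces, and use that translation in the $t$-direction is implemented by controlled isometries $V_f(t)$ and contractions $V^+_f(t)$ furnished by Lemma \ref{coarsetohom} and Lemma \ref{coarsetohom2}. Concretely, one builds an operator homotopy between $ad_{\iota_0}$ and $ad_{\iota_1}$ using the rotation matrices $R(t)$ already introduced before Lemma \ref{coarsetohom2}; because propagation is controlled along the homotopy (the estimate in Lemma \ref{coarsetohom2} bounds $\mathrm{prop}(Ad_f(u)(t))$ by $\sup_{(x,x')\in\mathrm{supp}(u(t))}d(f(x),f(x'))+4\epsilon_k$), the path stays inside $B^p$ (resp.\ $B^p_L$) and hence defines a homotopy of homomorphisms, giving equality on $K$-theory.

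For the ``moreover'' clause, suppose $f$ is a coarse homotopy equivalence with coarse inverse $f'$, so $f'f\simeq id_X$ and $ff'\simeq id_Y$ coarsely. By the first part applied to these homotopies, $(ad_{f'})_*\circ(ad_f)_*=(ad_{f'f})_*=(ad_{id_X})_*=\id$ on $K_*(B^p(X))$, and symmetrically on $K_*(B^p(Y))$; the same bookkeeping with $KX^p_*$ shows $f'_*\circ f_*=\id$ and $f_*\circ f'_*=\id$. Hence both vertical arrows in the diagram are isomorphisms. Commutativity of the square is just naturality of the evaluation-at-zero map $e_0\colon K_*(B^p_L(\cdot))\to K_*(B^p(\cdot))$ with respect to the homomorphisms $Ad_f$ and $ad_f$: one has $ad_f\circ e_0=e_0\circ Ad_f$ on the level of algebra homomorphisms (evaluating $Ad_f(u)$ at $t=0$ recovers $ad_f$ applied to $u(0)$, up to the auxiliary $M_2(\C)$ factor which is killed on $K$-theory), so the induced square commutes.
\end{proof}

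\textbf{The main obstacle} I anticipate is the first part: verifying that the operator path connecting $ad_{\iota_0}$ to $ad_{\iota_1}$ genuinely lands in the $\ell^p$ Roe (resp.\ localization) algebra at every parameter. In the $C^\ast$-setting one leans on functional calculus, unitarity of $R(t)$, and Hilbert-space orthogonality; here $R(t)$ is only an isometry/contraction on $\ell^p$ and the norms behave less rigidly, so I would need to check that the propagation and local-compactness conditions are preserved uniformly along the homotopy. The technical heart is thus the controlled-propagation estimate (the analogue of the bound in Lemma \ref{coarsetohom2}) applied to the height coordinate of the coarse homotopy $Z$, using hypothesis (1) to guarantee that nearby points of $X$ have comparable heights $t_x$, so that the ``shearing'' induced by $h$ remains a bona fide coarse map with finite propagation. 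Once that uniform control is in hand, the rest is the standard homotopy-invariance-of-$K$-theory machinery, and I would cite \cite[Proposition 12.4.12]{HigsonRoeBook} for the organizational skeleton while replacing each Hilbert-space step by the $\ell^p$ version supplied by Lemmas \ref{coarsetohom} and \ref{coarsetohom2}.
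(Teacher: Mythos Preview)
The paper does not give a self-contained proof of this lemma: it simply states that ``the proof is relied on Mayer--Vietoris principle'' and refers the reader to \cite[Proposition 12.4.12]{HigsonRoeBook}. Your proposal invokes exactly the same reference and the same governing principle, so at the level the paper operates, your approach is the paper's approach.

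That said, one point in your more detailed sketch deserves a caution. After correctly reducing to $(\iota_0)_* = (\iota_1)_*$ for the two inclusions $X\hookrightarrow Z$, you write ``Concretely, one builds an operator homotopy between $ad_{\iota_0}$ and $ad_{\iota_1}$ using the rotation matrices $R(t)$\ldots''. This is not what the Higson--Roe argument does, and it is not clear it can be made to work directly: since $d(\iota_0(x),\iota_1(x))=t_x$ is unbounded, there is no controlled isometry implementing a path between the two embeddings, and the rotation-matrix machinery of Lemmas \ref{coarsetohom} and \ref{coarsetohom2} produces homomorphisms from a given coarse map, not homotopies between two such homomorphisms. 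The actual mechanism in \cite[Proposition 12.4.12]{HigsonRoeBook} is an inductive Mayer--Vietoris decomposition of $Z$ along the height coordinate (slicing into bands on which $\iota_0$ and $\iota_1$ are uniformly close, then gluing), together with coarse excision; this is where the $\ell^p$ Mayer--Vietoris sequence for $B^p$ and $B^p_L$ is genuinely needed. Your ``moreover'' paragraph and the naturality-of-$e_0$ argument are fine as stated.
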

                

    Secondly, let us recall the definition of open cones.
\begin{definition} \label{DefOpenCone}
    Let $(M,d_M)$ be a compact metric space with diameter at most 2, the \textit{open cone} over $M$, denoted $\mathcal{O}M$, is defined to be the quotient space $\mathbb{R}_{\geq 0}\times M/(\{0\}\times M)$ with the following metric
                    $$d((t,x),(s,y))=|t-s|+\min{\{t,s\}}d_M(x,y)$$
for any $(t,x),(s,y)\in \mathcal{O}M$. 
\end{definition}

    Obviously, the open cone is a proper metric space. By \cite[Proposition B.1]{FukayaOguniCBCRH} and \cite[Proposition 4.3]{HigsonRoeCBC}, We can simplify the left side of the $\ell^p$ coarse Baum-Connes conjecture for open cones by the following lemma.
\begin{lemma} \label{SimplifyCBC}
    Let $\mathcal{O}M$ be the open cone over a compact metric space $M$, then there exists an anti-\v{C}ech system $\{C_k\}$ of $\mathcal{O}M$ such that 
                    $$i_{\ast}: K_{\ast}(B^p_L(\mathcal{O}M))\rightarrow \lim_{k\rightarrow \infty}K_{\ast}(B^p_L(N_{C_k}))$$
is an isomorphism, where $i_{\ast}$ is induced by a family of maps $i_k:\mathcal{O}M \rightarrow N_{C_k}$ that maps an element $x$ in $\mathcal{O}M$ to an element $U^{(i)}_k$ in $N_{C_k}$ such that $x\in U^{(i)}_k$. \par
    Moreover, the following diagram is commutative
                 \begin{displaymath}
                    \xymatrix{
                         K_{\ast}(B^p_L(\mathcal{O}M)) \ar[d]_{i_{\ast}} \ar[dr]^{e_0}  &&\\
                         \lim_{k\rightarrow \infty} K_{\ast}(B^p_L(N_{C_k})) \ar[r]_{\:\:\:\:\:\:\:\:\:\:e_0} &K_{\ast}(B^p(\mathcal{O}M)).
                      }
                  \end{displaymath}
    Thus, the map $i_{\ast}$ induces an isomorphism from the group $K_{\ast}(B^p_{L,0}(\mathcal{O}M))$ to the group $\lim_{k\rightarrow \infty} K_{\ast}(B^p_{L,0}(N_{C_k})).$
\end{lemma}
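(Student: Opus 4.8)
The plan is to transport the two cited results to the $\ell^p$ setting: \cite[Proposition 4.3]{HigsonRoeCBC} supplies a cone-adapted anti-\v Cech system, while \cite[Proposition B.1]{FukayaOguniCBCRH} is the abstract mechanism that turns such a system into the asserted isomorphism. The essential observation is that every ingredient of both proofs is assembled from uniformly continuous coarse maps, partitions of unity, and characteristic-function cutoffs, so that Lemmas \ref{coarsetohom} and \ref{coarsetohom2} apply with constants independent of $p$.

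First I would fix $\{C_k\}$ by exploiting the scaling structure of the cone, arranging that each $N_{C_k}$ is a finite-dimensional simplicial complex. I realize each $i_k$ continuously through a partition of unity $\{\phi_U\}_{U\in C_k}$ subordinate to $C_k$, sending $x\in\mathcal{O}M$ to $\sum_U\phi_U(x)\,U\in N_{C_k}$; this is a uniformly continuous coarse map coarsely close to the vertex map in the statement, hence inducing the same map on K-theory via Lemma \ref{coarsetohom2}. I also introduce section maps $j_k:N_{C_k}\to\mathcal{O}M$ sending each vertex $U$ to a chosen point $p_U\in U$ and extending by geodesic interpolation in cone coordinates; these are coarse maps by the uniform boundedness of $C_k$. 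The triangle then commutes for a formal reason: the explicit formulas of Lemmas \ref{coarsetohom} and \ref{coarsetohom2} give $e_0\circ\mathrm{Ad}_f=\mathrm{ad}_f\circ e_0$, so naturality of $e_0$ yields $e_0\circ i_{\ast}=e_0$ after passing to the limit.

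The heart of the argument is to show $i_{\ast}$ is an isomorphism, and here I would reduce to an abstract direct-limit statement. Using the cone geometry one checks that $j_k\circ i_k$ is strongly Lipschitz homotopic to $\mathrm{id}_{\mathcal{O}M}$ (the displacement is bounded by the scale $R_k$ of $C_k$, and the homotopy runs along cone geodesics), while $i_{k+1}\circ j_k$ is controlled-homotopic to the coarsening map $i_{k,k+1}$ (both land within bounded simplicial distance in $N_{C_{k+1}}$). Granting strong Lipschitz homotopy invariance of $K_{\ast}(B^p_L(-))$, these identities become $j_{k\ast}i_{k\ast}=\mathrm{id}$ and $i_{k+1,\ast}\,j_{k\ast}=(i_{k,k+1})_{\ast}$ on K-theory; a direct diagram chase then shows that $i_{\ast}:K_{\ast}(B^p_L(\mathcal{O}M))\to\lim_k K_{\ast}(B^p_L(N_{C_k}))$ is injective (if $i_{m\ast}(a)=0$ eventually then $a=j_{m\ast}i_{m\ast}(a)=0$) and surjective (a class represented by $x\in K_{\ast}(B^p_L(N_{C_m}))$ equals $i_{\ast}(j_{m\ast}x)$), which is exactly the abstract content of \cite[Proposition B.1]{FukayaOguniCBCRH}.

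The main obstacle is the input I granted: strong Lipschitz homotopy invariance of $K_{\ast}(B^p_L(-))$ for the $\ell^p$ localization algebras of $\mathcal{O}M$ and its nerves. In the $C^{\ast}$-case this rests on homotopy operators built from partitions of unity together with the six-term sequence of an ideal/quotient pair; for $\ell^p$ one must verify that the relevant decompositions give genuine short exact sequences of $\ell^p$ localization algebras and that the associated homotopy operators stay bounded. Since these operators are multiplication by bounded Borel functions and characteristic cutoffs, their norms are controlled uniformly in $p\in[1,\infty)$, so the invariance should transfer; establishing this uniform boundedness, together with the controlledness of the cone homotopies that makes the strong Lipschitz hypothesis hold, is the crux of the proof.
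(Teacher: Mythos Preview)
Your proposal is correct and follows essentially the same route as the paper, which in fact offers no proof beyond the citations to \cite[Proposition~4.3]{HigsonRoeCBC} and \cite[Proposition~B.1]{FukayaOguniCBCRH}. You have simply unpacked what those citations entail in the $\ell^p$ setting and correctly isolated the one point that requires genuine verification, namely strong Lipschitz homotopy invariance of $K_{\ast}(B^p_L(-))$; this is indeed available from the machinery of \cite{ZZ} (and the present paper's Lemmas \ref{coarsetohom} and \ref{coarsetohom2}), which is the implicit justification the paper is relying on.
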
 

    Combining the above lemma with Lemma \ref{CoaHomCBC}, we have the following lemma.
\begin{lemma}\label{SameHom}
    Let $\mathcal{O}M$ be the open cone over a compact metric space $M$ and $f: \mathcal{O}M\rightarrow \mathcal{O}M$ be a coarse map defined by $f((t,x))=(\frac{t}{2},x)$ for any $(t,x)\in \mathcal{O}M$, then $f$ is coarsely homotopic to the identity map $i$ and they induce the same homomorphism on $K_{\ast}(B^p_{L,0}(\mathcal{O}M))$, where $B^p_{L,0}(\mathcal{O}M)=\{u\in B^p_L(\mathcal{O}M): u(0)=0\}$.
\end{lemma}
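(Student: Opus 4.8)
The plan is to exhibit an explicit coarse homotopy between $f$ and $i$, then feed it into the homotopy-invariance of Lemma \ref{CoaHomCBC} together with the identification of Lemma \ref{SimplifyCBC}, and finally transport the conclusion down to the ideal $B^p_{L,0}(\mathcal{O}M)$.

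First I would build the homotopy. Put $t_{(t,x)} = t/2$, let $Z = \{((t,x),s) \in \mathcal{O}M \times \mathbb{R} : 0 \le s \le t/2\}$ carry the restriction of the product metric, and define $h : Z \to \mathcal{O}M$ by $h((t,x),s) = (\tfrac{t}{2} + s,\, x)$. Then $h((t,x),0) = (\tfrac{t}{2},x) = f(t,x)$ and $h((t,x),t/2) = (t,x) = i(t,x)$, so $h$ realizes the endpoint data required in Definition \ref{DefCoarHom}. Condition (1) of that definition holds because $|t_{(t,x)} - t_{(t',x')}| = \tfrac{1}{2}|t-t'| \le \tfrac{1}{2}\,d((t,x),(t',x'))$. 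The one place where the cone metric is used is in checking that $h$ is coarse, in fact Lipschitz: since $s \le t/2$ forces $\tfrac{t}{2}+s \le t$, one gets $\min\{\tfrac{t}{2}+s,\ \tfrac{t'}{2}+s'\} \le \min\{t,t'\}$, so the angular part of $d(h(\cdot),h(\cdot))$ is dominated by $\min\{t,t'\}\,d_M(x,x') \le d((t,x),(t',x'))$, while the radial part is dominated by $\tfrac{1}{2}\,d((t,x),(t',x')) + |s-s'|$; properness of $h$ is immediate from $s \ge 0$. Hence $f$ is coarsely homotopic to $i$.

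With the homotopy in hand, Lemma \ref{CoaHomCBC} yields $f_\ast = i_\ast$ on $KX^p_\ast(\mathcal{O}M)$ and $(ad_f)_\ast = (ad_i)_\ast$ on $K_\ast(B^p(\mathcal{O}M))$. To move to the localization algebra of the space itself, I would invoke the isomorphism $i_\ast : K_\ast(B^p_L(\mathcal{O}M)) \xrightarrow{\cong} \lim_{k\rightarrow\infty} K_\ast(B^p_L(N_{C_k})) = KX^p_\ast(\mathcal{O}M)$ of Lemma \ref{SimplifyCBC}: by functoriality of the construction and of the maps $i_k$, this isomorphism intertwines $(Ad_f)_\ast$ with $f_\ast$ and $(Ad_i)_\ast$ with $i_\ast$, so $(Ad_f)_\ast = (Ad_i)_\ast$ on $K_\ast(B^p_L(\mathcal{O}M))$ as well.

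Finally I would descend to the ideal. Since $Ad_f(u)(0) = V_f(0)(u(0)\oplus 0)V^+_f(0) = 0$ whenever $u(0)=0$, the homomorphisms $Ad_f, Ad_i$ map $B^p_{L,0}(\mathcal{O}M)$ into itself and descend to $ad_f, ad_i$ on the quotient $B^p(\mathcal{O}M)$; thus $f$ and $i$ act as morphisms of the six-term exact sequence attached to $0 \to B^p_{L,0}(\mathcal{O}M) \to B^p_L(\mathcal{O}M) \to B^p(\mathcal{O}M) \to 0$, agreeing on the two outer terms. I expect the main obstacle to lie exactly here: agreement on $K_\ast(B^p_{L,0}(\mathcal{O}M))$ does \emph{not} follow from a formal chase of this sequence, since knowing that $f_\ast - i_\ast$ annihilates the $B^p_L$- and $B^p$-terms only forces $f_\ast - i_\ast$ to take values in the image of the boundary map into $K_\ast(B^p_{L,0})$, and it need not vanish there precisely because $e_0$ is not yet known to be an isomorphism — that is the conjecture itself. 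The correct route is to establish coarse homotopy invariance directly for the obstruction algebra, running the Mayer--Vietoris argument underlying Lemma \ref{CoaHomCBC} (\cite[Proposition 12.4.12]{HigsonRoeBook}) relative to $B^p_{L,0}$; the crucial observation that makes this work is that every operator implementing the homotopy — the partial isometries built from $h$ and the rotations $R(\cdot)$ — respects evaluation at the localization parameter $0$, so the entire argument restricts to the ideal and delivers $(Ad_f)_\ast = (Ad_i)_\ast$ on $K_\ast(B^p_{L,0}(\mathcal{O}M))$. Equivalently, $h$ reduces the problem to showing that the two end-inclusions of $\mathcal{O}M$ into $Z$ induce the same map on $K_\ast(B^p_{L,0})$, which is where the cylinder geometry of $Z$ carries the real weight.
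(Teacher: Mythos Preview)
Your approach is essentially the paper's: build an explicit coarse homotopy, invoke Lemma~\ref{CoaHomCBC} for $KX^p_\ast$ and $K_\ast(B^p)$, and then pass to $B^p_{L,0}$ via Lemma~\ref{SimplifyCBC}. Two cosmetic differences: the paper's homotopy is the simpler one with \emph{constant} height, namely $Z=\{((t,x),s):0\le s\le \tfrac12\}$ and $h((t,x),s)=((1-s)t,x)$, so condition~(1) of Definition~\ref{DefCoarHom} is trivial; and the paper first descends to $\lim_k K_\ast(B^p_{L,0}(N_{C_k}))$ and only then transports to $K_\ast(B^p_{L,0}(\mathcal{O}M))$ via Lemma~\ref{SimplifyCBC} and a (legitimate) five-lemma comparison of six-term sequences.

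The point you single out is real. At the step you flag, the paper simply writes ``thus by the five lemma, they induce the same homomorphism on $\lim_k K_\ast(B^p_{L,0}(N_{C_k}))$'', i.e.\ exactly the inference you argue is not a formal consequence of equality on the $B^p_L$-- and $B^p$--terms. Your diagnosis is correct: from $f_\ast=i_\ast$ on the outer terms one only gets that $(f_\ast-i_\ast)$ on $K_\ast(B^p_{L,0})$ has image in $\operatorname{im}\partial$ and vanishes on $\operatorname{im}\partial$, which is nilpotency, not vanishing. Your proposed repair is the right one and is what makes the paper's sentence true: the Mayer--Vietoris argument behind Lemma~\ref{CoaHomCBC} (cf.\ \cite[Proposition~12.4.12]{HigsonRoeBook}) establishes coarse homotopy invariance for any functor with the requisite excision/cluster properties, and $X\mapsto K_\ast(B^p_{L,0}(X))$ has them, so $(Ad_f)_\ast=(Ad_i)_\ast$ holds directly on $K_\ast(B^p_{L,0})$. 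In short, your proof is correct and slightly more careful than the paper's at precisely this step.
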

\begin{proof}
    Let $Z=\{((t,x),s):0\leq s\leq \frac{1}{2}\}$ and $h: Z\rightarrow \mathcal{O}M$ defined by $h((t,x),s)=((1-s)t,x)$, then $h$ is a coarse homotopy connecting $i$ and $f$. Let $\{C_k\}$ be an anti-\v{C}ech system of $\mathcal{O}M$, then by Lemma \ref{CoaHomCBC}, $f$ and $i$ induce same homomorphisms on $\lim_{k\rightarrow \infty} K_{\ast}(B^p_L(N_{C_k}))$ and $K_{\ast}(B^p(\mathcal{O}M))$, thus by the five lemma, they induce the same homomorphism on $\lim_{k\rightarrow \infty} K_{\ast}(B^p_{L,0}(N_{C_k}))$, then by Lemma \ref{SimplifyCBC} and the five lemma, they induce the same homomorphism on $K_{\ast}(B^p_{L,0}(\mathcal{O}M))$.
\end{proof}

    Now, we begin to show and prove the main theorem of this article.
\begin{theorem} \label{mainthm}
    Let $\mathcal{O}M$ be the open cone over a compact metric space $M$, then for any $p\in [1,\infty)$, the $\ell^p$ coarse Baum-Connes conjecture holds for $\mathcal{O}M$.
\end{theorem}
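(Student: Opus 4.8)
The plan is to reduce the conjecture to the vanishing of a single obstruction group and then to kill that group by an Eilenberg swindle powered by Lemma~\ref{SameHom}. First I would transport the conjecture from the nerve spaces to $\mathcal{O}M$ itself. Applying the five lemma to the morphism of six-term sequences attached to $0\to B^p_{L,0}\to B^p_L\to B^p\to 0$ induced by the maps $i_k\colon\mathcal{O}M\to N_{C_k}$, and using that $i_\ast\colon K_\ast(B^p_L(\mathcal{O}M))\to\varinjlim_k K_\ast(B^p_L(N_{C_k}))$ is an isomorphism (Lemma~\ref{SimplifyCBC}) while $i_\ast\colon K_\ast(B^p(\mathcal{O}M))\to\varinjlim_k K_\ast(B^p(N_{C_k}))$ is an isomorphism by the coarse equivalence of each $N_{C_k}$ with $\mathcal{O}M$, I obtain an isomorphism $i_\ast\colon K_\ast(B^p_{L,0}(\mathcal{O}M))\xrightarrow{\cong}\varinjlim_k K_\ast(B^p_{L,0}(N_{C_k}))$ compatible with the evaluation maps. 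By Lemma~\ref{Thm:VanishingObstruction} (the conjecture being independent of the chosen anti-\v{C}ech system) the statement is therefore equivalent to $K_\ast(B^p_{L,0}(\mathcal{O}M))=0$, which is what I will establish.

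Write $f^n((t,x))=(t/2^n,x)$, so that $d(f^n(u),f^n(v))=2^{-n}d(u,v)$; thus $f^n$ contracts the metric by $2^{-n}$, and $(\mathrm{ad}_{f^n})_\ast$ equals $(\mathrm{ad}_f)_\ast^{\,n}$, which by Lemma~\ref{SameHom} is the identity on $K_\ast(B^p_{L,0}(\mathcal{O}M))$ for every $n$. I would then assemble the homomorphisms $\mathrm{ad}_{f^n}$ into one endomorphism $\Psi$ of $B^p_{L,0}(\mathcal{O}M)$, realised on the auxiliary coordinate through an isometric identification $\ell^p(Z)\otimes(\bigoplus_{n}^{\ell^p}\ell^p)\cong\ell^p(Z)\otimes\ell^p$. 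A naive block sum $\bigoplus_{n}\mathrm{ad}_{f^n}(u)$ fails to be locally compact, because $f^n$ concentrates operator supports at the cone point rather than pushing them to infinity; so I would instead switch the $n$-th summand on only for localisation time $s\ge s_n$, with $s_n\to\infty$, fading it in continuously. Since $\mathrm{prop}(u(s))\to 0$, at each fixed $s$ only finitely many summands are active, which keeps $\Psi(u)(s)$ locally compact with propagation tending to $0$; boundedness is automatic because each $V_{f^n}$ is isometric and each $V^+_{f^n}$ is contractive, and $u(0)=0$ forces $\Psi(u)(0)=0$. Thus $\Psi$ is a well-defined homomorphism of $B^p_{L,0}(\mathcal{O}M)$, as is the shifted assembly $\Psi'$ that omits the $n=0$ block.

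The swindle then runs as follows. Splitting off the $n=0$ block gives $\Psi=\mathrm{id}\oplus\Psi'$, hence $\Psi_\ast=\mathrm{id}_\ast+\Psi'_\ast$. On the other hand, $\mathrm{ad}_f\circ\Psi$ carries the block $n$ (namely $\mathrm{ad}_{f^n}$) to $\mathrm{ad}_f\circ\mathrm{ad}_{f^n}$, which induces the same $K$-theory map as $\mathrm{ad}_{f^{n+1}}$; comparing switch-on schedules shows $\mathrm{ad}_f\circ\Psi$ and $\Psi'$ are operator-homotopic, so that, using $(\mathrm{ad}_f)_\ast=\mathrm{id}$ from Lemma~\ref{SameHom}, one gets $\Psi'_\ast=(\mathrm{ad}_f\circ\Psi)_\ast=(\mathrm{ad}_f)_\ast\circ\Psi_\ast=\Psi_\ast$. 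Substituting into $\Psi_\ast=\mathrm{id}_\ast+\Psi'_\ast$ forces $\mathrm{id}_\ast=0$ on $K_\ast(B^p_{L,0}(\mathcal{O}M))$, whence this group vanishes and, by the reduction above, the $\ell^p$ coarse Baum--Connes conjecture holds for $\mathcal{O}M$.

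The main obstacle is the construction in the second step: verifying that the staggered assembly $\Psi$ genuinely lands in the $\ell^p$ localisation algebra. One must choose the switch-on times $s_n$ and the fade-in (built from the rotations $R$ appearing before Lemma~\ref{coarsetohom2}) so that $\Psi(u)$ is norm-continuous in $s$, locally compact at every $s$, and has propagation $\to 0$, all uniformly enough that the homotopy identifying $\mathrm{ad}_f\circ\Psi$ with $\Psi'$ stays inside the algebra; and because we work with $\ell^p$ one has only isometries $V_{f^n}$ and contractions $V^+_{f^n}$ rather than unitaries, so the error terms $4\epsilon_k$ of Lemma~\ref{coarsetohom2} must be controlled simultaneously across all active summands. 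Once this bookkeeping is in place the swindle is formal.
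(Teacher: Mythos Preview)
Your proposal is correct and follows essentially the same Eilenberg swindle as the paper. The paper resolves your stated obstacle cleanly by (i) choosing the dense subset $\mathbb{Q}_{\ge 0}\times Z_M$ so that the dilation $V\delta_{(t,x)}=\delta_{(t/2,x)}$ is a genuine invertible isometry rather than an abstract isometry/contraction pair from Lemma~\ref{coarsetohom}, and (ii) implementing the staggered switch-on simply as the time shift $V^{n+1}u(s-n)(V^+)^{n+1}$ in the $n$-th block, which vanishes for $n\ge s$ because $u\in B^p_{L,0}$ and needs no separate fade-in machinery.
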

\begin{proof}
    By Lemma \ref{Thm:VanishingObstruction} and Lemma \ref{SimplifyCBC}, it is sufficient to prove $K_{\ast}(B^p_{L,0}(\mathcal{O}M))=0$.\par
    Choose a countable dense subset $Z_M$ in $M$, let $E^p=\ell^p(\mathbb{Q}_{\geq 0}\times Z_M)\otimes \ell^p$ and $E^{p,\infty}=\bigoplus^{\infty}_{n=0}(\ell^p(\mathbb{Q}_{\geq 0}\times Z_M)\otimes \ell^p)$, where $\bigoplus$ is the $\ell^p$-direct sum, then there exists a natural action of $Bol(\mathcal{O}M)$ on $E^{p,\infty}$, thus similar to the definition of the $\ell^p$ Roe algebra on $E^p$ (denoted $B^p(\mathcal{O}M;E^p)$), we can define the $\ell^p$ Roe algebra on $E^{p,\infty}$, denoted $B^p(\mathcal{O}M;E^{p,\infty})$, so do $B^p_L(\mathcal{O}M;E^{p,\infty})$ and $B^p_{L,0}(\mathcal{O}M;E^{p,\infty})$. Denote $B^p_{L,0}(\mathcal{O}M)$ by $B^p_{L,0}(\mathcal{O}M;E^p)$. \par
    Let $f$ be a map from $\mathcal{O}M$ to $\mathcal{O}M$ given by $f((t,x))=(\frac{t}{2},x)$, define two linear operators $V$,$V^+$ on $E^p$ by 
                  $V(\delta_{(t,x)})=\delta_{(\frac{t}{2},x)}$,
                  $V^+(\delta_{(t,x)})=\delta_{(2t,x)}$,
where $\delta_{(t,x)}\in E^p$ maps $(t',x')$ to $1$ for $t'=t$ and $x'=x$, to $0$ for others. Then we have $V^+ V=V V^+=I$ and $\mathrm{supp}(V)=\{((t,x),f((t,x))):(t,x)\in \mathcal{O}M\}, \mathrm{supp}(V^+)=\{(f((t,x)),(t,x)): (t,x)\in \mathcal{O}M\}$. \par
    Define a map $\phi: B^p_{L,0}(\mathcal{O}M;E^p)\rightarrow B^p_{L,0}(\mathcal{O}M;E^{p,\infty})$ by 
                  $$\phi(u)(s)=0\oplus V u(s) V^+\oplus \cdots \oplus V^{n+1} u(s-n) (V^+)^{n+1}\oplus \cdots$$
for any $u\in B^p_{L,0}(\mathcal{O}M;E^p)$ and let $u(s)=0$ for $s<0$. Now we show $\phi$ is well-defined. Firstly, for any $s\geq 0$, the operator $\phi(u)(s)$ is locally compact since $u(s-n)=0$ for all $n\geq s$. Secondly, by direct computation, we have that $\mathrm{supp}(V^{n+1} u(s-n) (V^+)^{n+1})$ is equal to the set 
                  $$\{((\frac{t}{2^{n+1}},x),(\frac{t'}{2^{n+1}},x')): ((t,x),(t',x'))\in \mathrm{supp}(u(s-n))\},$$
which implies that 
                  $$\mathrm{prop}(V^{n+1} u(s-n) (V^+)^{n+1})=\frac{\mathrm{prop}(u(s-n))}{2^{n+1}},$$
thus $\mathrm{prop}(\phi(u)(s))\rightarrow 0$ as $s\rightarrow \infty$. Finally, $\phi(u)(0)=0$ is obvious, thus $\phi$ is well-defined. \par
    Define $\psi:B^p_{L,0}(\mathcal{O}M;E^p)\rightarrow B^p_{L,0}(\mathcal{O}M;E^{p,\infty})$ by
                  $$\psi(u)(s)=u(s)\oplus 0 \oplus \cdots \oplus 0 \oplus \cdots$$
for any $u\in B^p_{L,0}(\mathcal{O}M;E^p)$. Now we prove $\psi$ induces an isomorphism $\psi_{\ast}$ on K-theory level. Let $T: E^p\rightarrow E^{p,\infty}$ given by $T(\xi)=(\xi,0,\cdots,0,\cdots)$ for any $\xi \in E^p$ and $T^+:E^{p,\infty}\rightarrow E^p$ given by $T^+(\xi_0,\xi_1,\cdots,\xi_n,\cdots)=\xi_0$ for any $(\xi_0,\xi_1,\cdots,\xi_n,\cdots)\in E^{p,\infty}$, then $T^+T=I$ and $\psi(u)(s)=Tu(s)T^+$. Identify $E^{p,\infty}$ with $\ell^p(\mathbb{Q}_{\geq 0}\times Z_M)\otimes (\bigoplus \ell^p)$, then the natural isometric isomorphism between $\ell^p$ and $\bigoplus \ell^p$ gives an isometric isomorphism $U$ from $E^p$ to $E^{p,\infty}$ which is the identity operator on $\ell^p(\mathbb{Q}_{\geq 0}\times Z_M)$. Define
                  $$W=\left(
                          \begin{array}{cc}
                              TU^{-1}  & I-TT^+\\
                              0        & UT^+\\
                          \end{array}
                        \right),     
                  $$
then $W$ is an invertible operator on $E^{p,\infty}\oplus E^{p,\infty}$ with zero propagation and we have
                  $$\psi(u)(s)\oplus 0=W(U u(s)U^{-1}\oplus 0)W^{-1}$$
for any $u\in B^p_{L,0}(\mathcal{O}M;E^p)$, thus $\psi_{\ast}$ is an isomorphism from $K_{\ast}(B^p_{L,0}(\mathcal{O}M;E^p))$ to $K_{\ast}(B^p_{L,0}(\mathcal{O}M;E^{p,\infty}))$. \par 
    The homomorphism $\phi+\psi: B^p_{L,0}(\mathcal{O}M;E^p)\rightarrow B^p_{L,0}(\mathcal{O}M;E^{p,\infty})$ is given by
                 $$(\phi+\psi)(u)(s)=u(s)\oplus V u(s) V^+\oplus \cdots \oplus V^{n+1} u(s-n) (V^+)^{n+1}\oplus \cdots.$$
Define a homotopy $\Psi_{\lambda}:B^p_{L,0}(\mathcal{O}M;E^p)\rightarrow B^p_{L,0}(\mathcal{O}M;E^{p,\infty})$ by 
                 $$(\Psi)_{\lambda}(u)(s)=u(s)\oplus V u(s-\lambda) V^+\oplus \cdots \oplus V^{n+1} u(s-n-\lambda) (V^+)^{n+1}\oplus \cdots,$$
where $\lambda\in[0,1]$, then $\Psi_0=\phi+\psi$ and $S(\bigoplus_{n=0}^{\infty}V)(\Psi_1(u)(s))(\bigoplus_{n=0}^{\infty}V^+)S^+=\phi(u)(s)$ for any $u\in B^p_{L,0}(\mathcal{O}M;E^p)$, where $S,S^+:E^{p,\infty}\rightarrow E^{p,\infty}$ are defined by $S(\xi_0,\xi_1,\cdots)=(0,\xi_0,\xi_1,\cdots)$ and $S^+(\xi_0,\xi_1,\xi_2,\cdots)=(\xi_1,\xi_2,\cdots)$. Thus we have the following relations at $K$-theory level
                  $$\phi_{\ast}+\psi_{\ast}=(\Psi_0)_{\ast}=(\Psi_1)_{\ast}=\phi_{\ast}$$
(the last equation as above is due to Lemma \ref{coarsetohom2} and \ref{SameHom}), this implies that $\psi_{\ast}=0$. But we have shown that $\psi_{\ast}$ is an isomorphism from $K_{\ast}(B^p_{L,0}(\mathcal{O}M;E^p))$ to $K_{\ast}(B^p_{L,0}(\mathcal{O}M;E^{p,\infty}))$, Therefore, $K_{\ast}(B^p_{L,0}(\mathcal{O}M))=0$.                
\end{proof}
\begin{remark}
    A proper metric space $X$ is \textit{scaleable} if there is a continuous and proper map $f: X\rightarrow X$ which is coarsely homotopic to the identity map, such that $d(f(x),f(x'))\leq \frac{1}{2}d(x,x')$ for all $x,x'\in X$. By Lemma \ref{SameHom}, every open cone is scaleable space. Actually, the above theorem widely holds for all scaleable spaces by the similar proof. In \cite{HigsonRoeCBC}, N. Higson and J. Roe have proved this theorem for $p=2$ by using different language.
\end{remark}
    Combine the above theorem and Corollary \ref{indepRoe}, we have the following corollary.
\begin{corollary} \label{indepen}
    Let $\mathcal{O}M$ be the open cone over a compact metric space $M$, then for any $p\in [1,\infty)$, the group $K_{\ast}(B^p(\mathcal{O}M))$ is isomorphic to the group $K_{\ast}(B^2(\mathcal{O}M))$, i.e. the $K$-theory for $\ell^p$ Roe algebra of $\mathcal{O}M$ does not depend on $p\in[1,\infty)$. 
\end{corollary}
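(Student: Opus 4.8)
The plan is to read off the statement directly from the two results already in place: Theorem \ref{mainthm}, which verifies the $\ell^p$ coarse Baum-Connes conjecture for $\mathcal{O}M$ for every $p\in[1,\infty)$, and Corollary \ref{indepRoe}, whose hypothesis is exactly that the $\ell^p$ conjecture holds for the space across all $p\in[1,\infty)$. First I would apply Theorem \ref{mainthm} with $X=\mathcal{O}M$ to secure that hypothesis (the case $p=2$ in particular recovers the classical coarse Baum-Connes conjecture for the open cone). Feeding this into Corollary \ref{indepRoe} yields at once that $K_*(B^p(\mathcal{O}M))$ is independent of $p\in[1,\infty)$, and specializing the comparison to $p=2$ gives the asserted isomorphism $K_*(B^p(\mathcal{O}M))\cong K_*(B^2(\mathcal{O}M))$.

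To make the mechanism transparent, I would exhibit the isomorphism as an explicit chain. For each $p$ the evaluation-at-zero map gives $e_0\colon K_*(B^p_L(\mathcal{O}M))\xrightarrow{\cong}K_*(B^p(\mathcal{O}M))$: indeed, by Lemma \ref{SimplifyCBC} the left-hand side $\lim_k K_*(B^p_L(N_{C_k}))$ of the conjecture is identified via $i_*$ with $K_*(B^p_L(\mathcal{O}M))$, and the commuting triangle there together with Theorem \ref{mainthm} forces $e_0$ to be an isomorphism onto $K_*(B^p(\mathcal{O}M))$. The $p$-independence of the source is then supplied by Lemma \ref{indepofloc}, giving $K_*(B^p_L(\mathcal{O}M))\cong K_*(B^2_L(\mathcal{O}M))$. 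Concatenating the three isomorphisms $K_*(B^p(\mathcal{O}M))\cong K_*(B^p_L(\mathcal{O}M))\cong K_*(B^2_L(\mathcal{O}M))\cong K_*(B^2(\mathcal{O}M))$ produces the result.

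The point I want to flag is that there is essentially no obstacle remaining at this stage: all the genuine difficulty has been discharged upstream, namely in Theorem \ref{mainthm} (the vanishing $K_*(B^p_{L,0}(\mathcal{O}M))=0$, obtained through the scaling map $f(t,x)=(t/2,x)$ and the Eilenberg-swindle on the $\ell^p$-direct sum $E^{p,\infty}$) and in Lemma \ref{indepofloc} (the $p$-independence of the localization $K$-theory, established in Section \ref{appendixA}). The only subtlety worth noting is that the composed isomorphisms must fit together over the same space; here Lemma \ref{SimplifyCBC} is what makes this painless, since it lets me replace the anti-\v{C}ech inductive limit $\lim_k K_*(B^p_L(N_{C_k}))$ by the localization algebra of $\mathcal{O}M$ itself, so that Lemma \ref{indepofloc} applies directly to $\mathcal{O}M$ and no naturality of the $p$-comparison with respect to the transition maps $(\mathrm{Ad}_{i_{k_1k_2}})_*$ need be checked. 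The proof is therefore a clean concatenation of established isomorphisms.
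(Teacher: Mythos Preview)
Your proposal is correct and matches the paper's own argument, which simply says ``Combine the above theorem and Corollary \ref{indepRoe}''; your first paragraph is exactly that, and the remainder just unpacks the chain of isomorphisms underlying Corollary \ref{indepRoe}. Your observation that Lemma \ref{SimplifyCBC} lets one apply Lemma \ref{indepofloc} directly to $\mathcal{O}M$ rather than to each nerve $N_{C_k}$ is a helpful clarification, though not strictly needed once Corollary \ref{indepRoe} is taken as given.
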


\section{Applications} \label{Apps}
    In this section, we will show some applications of the main theorem (i.e. Theorem \ref{mainthm}) based on the result of T. Fukeya and S.-I. Oguni in \cite{FukayaOguniCCH}. 

    The following concept comes from \cite[Definition 3.1]{FukayaOguniCCH}.
\begin{definition} \label{CoarselyConvex}
    Let $X$ be a metric space. Let $\lambda\geq 1$, $k\geq 0$, $E\geq 1$ and $C\geq 0$ be constants. Let $\theta: \mathbb{R}_{\geq 0}\rightarrow \mathbb{R}_{\geq 0}$ be a non-decreasing function. Let $\mathcal{L}$ be a family of $(\lambda,k)$-quasi-geodesic segments. The metric space $X$ is \textit{$(\lambda,k,E,C,\theta,\mathcal{L})$-coarsely convex}, if $\mathcal{L}$ satisfies the following:
    \begin{enumerate}
        \item for $x_1,x_2\in X$, there exists a quasi-geodesic segment $\gamma\in \mathcal{L}$ with $\gamma:[0,a]\rightarrow X$ such that $\gamma(0)=x_1$ and $\gamma(a)=x_2$;
        \item let $\gamma, \eta\in \mathcal{L}$ be quasi-geodesic segments with $\gamma:[0,a]\rightarrow X$ and $\eta:[0,b]\rightarrow X$, then for $t\in [0,a]$, $s\in [0,b]$ and $0\leq c\leq1$, we have that 
             $$d(\gamma(ct),\eta(cs))\leq cEd(\gamma(t),\eta(s))+(1-c)Ed(\gamma(0),\eta(0))+C;$$
        and 
             $$|t-s|\leq \theta(d(\gamma(0),\eta(0))+d(\gamma(t),\eta(s))).$$
    \end{enumerate} 
    We call a metric space $X$ to be a \textit{coarsely convex space}, if there exist data $\lambda,k,E,C,\theta,\mathcal{L}$ such that $X$ is $(\lambda,k,E,C,\theta,\mathcal{L})$-coarsely convex.
\end{definition}

\begin{remark}
    For a $(\lambda,k,E,C,\theta,\mathcal{L})$-coarsely convex space $X$, in \cite[Section 4]{FukayaOguniCCH}, T. Fukeya and S.-I. Oguni constructed the ideal boundary $\partial X$ of $X$, as the set of equivalence classes of quasi-geodesic rays which can be approximated by quasi-geodesic segments in $\mathcal{L}$. 
\end{remark}
    
    The following result is the main theorem in \cite{FukayaOguniCCH}.
\begin{lemma}
    Let $X$ be a proper coarsely convex space, then $X$ is coarsely homotopy equivalent to $\mathcal{O}\partial X$, the open cone over the ideal boundary of $X$.
\end{lemma}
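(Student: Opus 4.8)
The plan is to realise $X$ as a coarse cone over an explicitly constructed ideal boundary, following the geodesic-pencil picture that Definition \ref{DefOpenCone} axiomatises. Fix a basepoint $x_0\in X$ and restrict attention to the $(\lambda,k)$-quasi-geodesics of $\mathcal{L}$ issuing from $x_0$; call these \emph{rays} when their domain is $[0,\infty)$. First I would define $\partial X$ to be the set of asymptoty classes of such rays, where $\gamma\sim\eta$ when $d(\gamma(t),\eta(t))$ grows sublinearly, and metrise it by a visual-type distance calibrated through the convexity data, so that the linear scaling of the cone metric $d((t,\xi),(s,\eta))=|t-s|+\min\{t,s\}\,d_{\partial X}(\xi,\eta)$ will be compatible with the intrinsic geometry of $X$; the additive and multiplicative constants $C,E$ and the modulus $\theta$ of Definition \ref{CoarselyConvex} are exactly the data needed to see that this distance is well defined and symmetric. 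Properness of $X$, together with the uniform quasi-geodesic constants $(\lambda,k)$, lets an Arzel\`a--Ascoli argument extract convergent subsequences of rays, which yields compactness of $\partial X$ and hence makes $\mathcal{O}\partial X$ a genuine open cone in the sense of Definition \ref{DefOpenCone}.

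Next I would write down the comparison maps. Set $\Phi\colon\mathcal{O}\partial X\to X$, $\Phi(t,\xi)=\gamma_\xi(t)$, where $\gamma_\xi$ is a fixed representative ray of $\xi$; and $\Psi\colon X\to\mathcal{O}\partial X$, $\Psi(x)=(d(x_0,x),\xi_x)$, where $\xi_x$ is the class of a ray obtained by extending a quasi-geodesic from $x_0$ through $x$ to infinity (the extension produced by a limiting argument: take segments from $x_0$ to points receding to infinity past $x$ and pass to a subsequential limit using properness). That $\Phi$ and $\Psi$ satisfy the two requirements of Definition \ref{DefCoarseMap} is where condition (2) of Definition \ref{CoarselyConvex} enters directly: the convexity inequality bounds $d(\gamma_\xi(t),\gamma_\eta(s))$ from above in terms of the cone distance, giving the bornologous estimate, while the companion inequality $|t-s|\le\theta(d(\gamma(0),\eta(0))+d(\gamma(t),\eta(s)))$ controls the radial parameter from below and thereby yields properness.

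The core of the argument is to produce coarse homotopies $\Phi\circ\Psi\simeq\mathrm{id}_X$ and $\Psi\circ\Phi\simeq\mathrm{id}_{\mathcal{O}\partial X}$ in the sense of Definition \ref{DefCoarHom}. The composite $\Psi\circ\Phi(t,\xi)=(d(x_0,\gamma_\xi(t)),\xi)$ already lies within bounded radial distance of $(t,\xi)$, since $d(x_0,\gamma_\xi(t))=t+O(1)$ by the quasi-geodesic inequalities and the angular coordinate is unchanged; this composite is therefore coarsely homotopic to the identity through the evident short radial homotopy. The other composite $\Phi\circ\Psi$ is genuinely deformed rather than merely close, and here the convexity inequality itself supplies the homotopy: reading $d(\gamma(ct),\eta(cs))\le cE\,d(\gamma(t),\eta(s))+C$ with common basepoint $\gamma(0)=\eta(0)=x_0$ as a controlled contraction of the quasi-geodesic pencil toward $x_0$, one obtains a radial-scaling homotopy of exactly the type used in Lemma \ref{SameHom}, with mapping cylinder $Z=\{(x,s):0\le s\le d(x_0,x)\}$ and control height $t_x=d(x_0,x)$; condition (1) of Definition \ref{DefCoarHom} for this $t_x$ is once more the $\theta$-inequality. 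Chaining this scaling homotopy with the fact that $\gamma_{\xi_x}$ passes through $x$ up to the quasi-geodesic error then connects $\Phi\circ\Psi$ to $\mathrm{id}_X$.

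The main obstacle, as throughout coarse geometry, is uniformity of constants rather than any single estimate. The representative rays $\gamma_\xi$ and the extending rays defining $\xi_x$ are choices, and the whole argument collapses unless different admissible choices are shown to differ by amounts bounded uniformly over $X$ and $\partial X$; securing this coherent control is precisely what forces one to use the full strength of Definition \ref{CoarselyConvex}, including the global modulus $\theta$ and the constant $E$, rather than a pointwise comparison. Closely tied to it is the most delicate analytic point, namely pinning down the correct metric on $\partial X$ and its compactness: one must upgrade the convexity inequality from the one-sided contraction bound that comes for free to a genuine two-sided comparison at infinity, so that the linearly scaling cone metric of Definition \ref{DefOpenCone} faithfully models the geometry of $X$ up to coarse homotopy. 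Once these uniform comparisons are in hand, the verification that $\Phi$ and $\Psi$ are coarse and that the two composites are coarsely homotopic to the identities is routine bookkeeping with the constants $\lambda,k,E,C$ and the modulus $\theta$.
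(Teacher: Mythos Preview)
The paper does not prove this lemma at all: it is quoted verbatim as the main theorem of Fukaya--Oguni \cite{FukayaOguniCCH} and used as a black box, so there is no in-paper argument to compare your proposal against.

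For what it is worth, your outline is broadly the Fukaya--Oguni strategy: build $\partial X$ from equivalence classes of $(\lambda,k)$-quasi-geodesic rays based at a fixed point, metrise it by a visual-type distance, obtain compactness from properness via Arzel\`a--Ascoli, and then set up exponential and logarithmic comparison maps whose composites are coarsely homotopic to the identities through radial rescaling. Two places where your sketch is optimistic: the claim that $\Psi\circ\Phi(t,\xi)$ has angular coordinate exactly $\xi$ is not right in general, since the ray extended through $\gamma_\xi(t)$ need only be \emph{close} to $\xi$ in the boundary metric, and controlling that discrepancy is real work; and the existence and uniform quality of the extending ray in the definition of $\Psi(x)$ is not immediate from Definition~\ref{CoarselyConvex} but needs the full limiting argument together with the convexity inequality to ensure the limit ray passes within a uniform constant of $x$. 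You flag both issues in your final paragraph under ``uniformity of constants'' and the two-sided boundary comparison, which is accurate; just be aware that in the cited reference these points occupy the bulk of the paper, so the sketch should be read as a plan rather than a proof.
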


    Combine this lemma with Theorem \ref{mainthm} and Lemma \ref{CoaHomCBC}, we have the following result.
\begin{theorem}\label{CoaConvCBC}
    Let $X$ be a proper coarsely convex space, then $X$ satisfies the $\ell^p$ coarse Baum-Connes conjecture for any $p\in [1,\infty)$.
\end{theorem}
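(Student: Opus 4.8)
The plan is to combine the three ingredients already assembled in the paper: the coarse-homotopy invariance of the conjecture (Lemma \ref{CoaHomCBC}), the Fukaya--Oguni description of $X$ as coarsely homotopy equivalent to the open cone $\mathcal{O}\partial X$ over its ideal boundary, and the validity of the conjecture for open cones (Theorem \ref{mainthm}). Since the substantive work has been done in those results, the proof is essentially a diagram chase that transports the isomorphism $e_0$ from the cone back to $X$.

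First I would invoke the preceding lemma (the main theorem of \cite{FukayaOguniCCH}) to obtain a coarse homotopy equivalence map $f\colon X\to\mathcal{O}\partial X$, where $\partial X$ is the ideal boundary of $X$. At this point I would record that, for a proper coarsely convex space, $\partial X$ is a compact metric space, so that $\mathcal{O}\partial X$ is genuinely an open cone in the sense of Definition \ref{DefOpenCone} and both $X$ and $\mathcal{O}\partial X$ are proper metric spaces. This is precisely what makes the next two results applicable.

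Next, by Theorem \ref{mainthm} the $\ell^p$ coarse Baum--Connes conjecture holds for $\mathcal{O}\partial X$, i.e. the evaluation-at-zero map $e_0\colon KX^p_{\ast}(\mathcal{O}\partial X)\to K_{\ast}(B^p(\mathcal{O}\partial X))$ is an isomorphism for every $p\in[1,\infty)$. Applying Lemma \ref{CoaHomCBC} to the coarse homotopy equivalence $f$ yields the commutative square in which the two vertical arrows $f_{\ast}$ and $(ad_f)_{\ast}$ are isomorphisms; together with the bottom horizontal isomorphism, a routine diagram chase forces the top arrow $e_0\colon KX^p_{\ast}(X)\to K_{\ast}(B^p(X))$ to be an isomorphism as well. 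This is exactly the statement that $X$ satisfies the $\ell^p$ coarse Baum--Connes conjecture.

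I do not anticipate a serious obstacle: the argument is a formal consequence of results established earlier in the paper together with \cite{FukayaOguniCCH}. The only point demanding care is the verification that the ideal boundary $\partial X$ is compact, so that Theorem \ref{mainthm} applies verbatim to $\mathcal{O}\partial X$; this should be extracted from the structure theory of proper coarsely convex spaces and cited rather than reproved, and one should likewise confirm that the equivalence $f$ furnished by \cite{FukayaOguniCCH} is a coarse homotopy equivalence in the precise sense required by Definition \ref{DefCoarHom}, so that Lemma \ref{CoaHomCBC} is legitimately invoked.
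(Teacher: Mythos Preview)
Your proposal is correct and matches the paper's own approach exactly: the paper simply states that the theorem follows by combining the Fukaya--Oguni lemma with Theorem \ref{mainthm} and Lemma \ref{CoaHomCBC}, which is precisely the diagram chase you describe. Your added remarks about verifying compactness of $\partial X$ and the compatibility of the notions of coarse homotopy equivalence are reasonable points of care, but they are not elaborated in the paper either.
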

\begin{remark}
    In \cite{FukayaOguniCCH}, T. Fukaya and S.-I. Oguni have proved this theorem for $p$=2 by using N. Higson and J. Roe's result in \cite{HigsonRoeCBC}.
\end{remark}

    By the Corollary \ref{indepRoe}, we have the following corollary.
\begin{corollary}
    Let $X$ be a proper coarsely convex space, then the $K$-theory of $\ell^p$ Roe algebra $K_{\ast}(B^p(X))$ does not depend on $p\in [1,\infty)$.
\end{corollary}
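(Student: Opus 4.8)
The plan is to deduce this statement directly from the two results immediately preceding it, with no new analysis required. First I would invoke Theorem \ref{CoaConvCBC}: since $X$ is a proper coarsely convex space, the $\ell^p$ coarse Baum-Connes conjecture holds for $X$ for every $p\in[1,\infty)$. This is precisely the hypothesis demanded by Corollary \ref{indepRoe}, which asserts that whenever a proper metric space satisfies the $\ell^p$ coarse Baum-Connes conjecture for all $p\in[1,\infty)$, the groups $K_{\ast}(B^p(X))$ are independent of $p\in[1,\infty)$. Applying Corollary \ref{indepRoe} to $X$ then yields the claim immediately.

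For the reader's orientation I would briefly recall the mechanism underlying Corollary \ref{indepRoe}, since it explains why $p$-independence is transported from the localization side to the Roe algebra side. By the conjecture, the evaluation-at-zero map $e_0:\lim_{k\rightarrow\infty} K_{\ast}(B^p_L(N_{C_k}))\rightarrow K_{\ast}(B^p(X))$ is an isomorphism, so it suffices to show the left-hand side is $p$-independent. This is furnished by Lemma \ref{indepofloc}, which provides $K_{\ast}(B^p_L(N_{C_k}))\cong K_{\ast}(B^2_L(N_{C_k}))$ for each $k$; one then verifies that these isomorphisms are compatible with the connecting maps $(\mathrm{Ad}_{i_{k_1k_2}})_{\ast}$ of the directed system, so that passing to the inductive limit gives $\lim_{k\rightarrow\infty} K_{\ast}(B^p_L(N_{C_k}))\cong \lim_{k\rightarrow\infty} K_{\ast}(B^2_L(N_{C_k}))$. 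Chaining these isomorphisms produces $K_{\ast}(B^p(X))\cong K_{\ast}(B^2(X))$.

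Since both Theorem \ref{CoaConvCBC} and Corollary \ref{indepRoe} are already in hand, there is no genuine obstacle: the statement is a formal consequence, and the proof is essentially a one-line citation of these two facts. The only delicate point is the compatibility of the $p$-independence isomorphisms with the inductive-limit structure, but this is internal to the proof of Corollary \ref{indepRoe} and may be taken as established here.
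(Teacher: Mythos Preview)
Your proposal is correct and matches the paper's approach exactly: the paper derives this corollary in one line by citing Corollary \ref{indepRoe}, with Theorem \ref{CoaConvCBC} (stated immediately before) supplying its hypothesis. Your additional paragraph unpacking the mechanism behind Corollary \ref{indepRoe} is accurate but goes beyond what the paper provides here.
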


\begin{example}
    The following examples are proper coarsely convex spaces:
    \begin{enumerate}
      \item geodesic Gromov hyperbolic spaces \cite{GhysHarpeHyperbolic};
      \item CAT(0)-spaces, more generally, Busemann non-positively curved spaces \cite{BridsonHaefligerBook}\cite{PapadopoulosBook};
      \item systolic groups with the word length metric \cite{OsajdaPrzytyckiSystolic}, especially, Artin groups of almost large type \cite{HuangOsajdaArtin} and graphical small cancellation groups \cite{OsajdaPrytulaSys}; 
      \item Helly groups with the word length metric \cite{HellyGroups}, especially, weak Garside groups of finite type and FC-type Artin groups \cite{HuangOsajdaHelly};
      \item products of proper coarsely convex spaces.
    \end{enumerate}
    By the above theorem and corollary, these spaces satisfy the $\ell^p$ coarse Baum-Connes conjecture for any $p\in[1,\infty)$ and the $K$-theory of their $\ell^p$ Roe algebras does not depend on $p\in[1,\infty)$.
\end{example}


\section{Proof of Lemma \ref{indepofloc}} \label{appendixA}
    In this section, we will give a proof of Lemma \ref{indepofloc} based on the results of Section 5 in \cite{ZZ} where the authors have proved this lemma for all finite dimensional simplicial complexes. This answers a question raised by Y. C. Chung and P. W. Nowak in \cite{ChungNowakPCBC}.\par
    In what follows, we assume $X$ be a locally compact, second countable, Hausdorff space, $X^+$ be the one-point compactification of $X$ and $Z_X$ be a countable dense subset in $X$. Firstly, Similar to \cite[Definition 6.2.3]{WillettYuBook}, we extend the definition of the $\ell^p$ localization algebra to all locally compact, second countable, Hausdorff spaces.
    
\begin{definition} \label{deflocalg2}
    The algebra of all bounded linear operators on $\ell^p(Z_X)\otimes \ell^p$ is denoted by $\mathfrak{B}(\ell^p(Z_X)\otimes \ell^p)$. Define $B^p_L[X]$ to be the collection of all bounded functions $u$ from $[0,\infty)$ to $\mathfrak{B}(\ell^p(Z_X)\otimes \ell^p)$ such that
  \begin{enumerate}
    \item for any compact subset $K$ of $X$, there exists a positive number $t_K$ such that $\chi_K u(t)$ and $u(t)\chi_K$ are compact operators for all $t\geq t_K$, moreover, the functions $t\mapsto \chi_K u(t)$ and $t\mapsto u(t)\chi_K$ are uniformly norm continuous when restricted to $[t_K,\infty)$,
    \item for any open neighborhood $U$ of the diagonal in $X^+\times X^+$, there exists a positive number $t_U$ such that $\mathrm{supp}(u(t))\subseteq U$ for all $t\geq t_U$.
  \end{enumerate}
  The \textit{$\ell^p$ localization algebra} of $X$, denoted $B^p_L(X)$, is defined to be the norm closure of $B^p_L[X]$ with the norm $||u||=\sup||u(t)||$.  
\end{definition}

\begin{remark} \label{isohomology}
    The above definition is non-canonically independent of the countable dense subset $Z_X$ of $X$, and its K-theory is canonically independent of $Z_X$. When $X$ is a proper metric space, the two $\ell^p$ localization algebras of $X$ defined by Definition \ref{deflocalg} and Definition \ref{deflocalg2} are isomorphic at the $K$-theory level, refer to Chapter 6 of \cite{WillettYuBook} for the details. 
\end{remark}

    For $p\in (1,\infty)$, let $q$ be the dual number of $p$, i.e. $\frac{1}{p}+\frac{1}{q}=1$, define $\mathfrak{B}^{\ast}(\ell^p(Z_X)\otimes \ell^p)$ to be the collection of all linear operators $T$ acting on $C_c(Z_X\times \mathbb{N})$ and satisfying that there exists a constant $C$ such that $||T\xi||_{\ell^p(Z_X)\otimes \ell^p}\leq C||\xi||_{\ell^p(Z_X)\otimes \ell^p}$ and $||T\xi||_{\ell^q(Z_X)\otimes \ell^q}\leq C||\xi||_{\ell^q(Z_X)\otimes \ell^q}$ for any $\xi \in C_c(Z_X\times \mathbb{N})$, where $C_c(Z_X\times \mathbb{N})$ is the linear space of all continuous functions on $Z_X\times \mathbb{N}$ with compact support. Then $\mathfrak{B}^{\ast}(\ell^p(Z_X)\otimes \ell^p)$ is a Banach algebra equipped with the norm $||T||_{\mathrm{max}}=\max\{||T||_{\mathfrak{B}(\ell^p(Z_X)\otimes \ell^p)},||T||_{\mathfrak{B}(\ell^q(Z_X)\otimes \ell^q)}\}$. For $p=1$, let $q=3$, then we can similarly define $\mathfrak{B}^{\ast}(\ell^1(Z_X)\otimes \ell^1)$ to be the collection of all linear operators $S$ acting on $C_c(Z_X\times \mathbb{N})$ which can be boundedly extend to $\ell^1(Z_X)\otimes \ell^1$ and $\ell^3(Z_X)\otimes \ell^3$.

\begin{definition} \label{defduallocalg}
    Let $p,q$ be as above. Define $B^{p,\ast}_L[X]$ to be the collection of all bounded functions $u$ from $[0,\infty)$ to $\mathfrak{B}^{\ast}(\ell^p(Z_X)\otimes \ell^p)$ such that
  \begin{enumerate}
    \item for any compact subset $K$ of $X$, there exists a positive number $t_K$ such that $\chi_K u(t)$ and $u(t)\chi_K$ are compact operators on $\ell^p(Z_X)\otimes \ell^p$ and $\ell^q(Z_X)\otimes \ell^q$ for all $t\geq t_K$, moreover, the functions $t\mapsto \chi_K u(t)$ and $t\mapsto u(t)\chi_K$ are uniformly norm continuous when restricted to $[t_K,\infty)$,
    \item for any open neighborhood $U$ of the diagonal in $X^+\times X^+$, there exists a positive number $t_U$ such that $\mathrm{supp}(u(t))\subseteq U$ for all $t\geq t_U$.
  \end{enumerate}
  The \textit{dual $\ell^p$ localization algebra} of $X$, denoted $B^{p,\ast}_L(X)$, is defined to be the norm closure of $B^p_L[X]$ with the norm $||u||=\sup||u(t)||_{\mathrm{max}}$.  
\end{definition}

    There is an inclusion homomorphism $i: B^{p,\ast}_L(X)\rightarrow B^p_L(X)$, on the other hand, by the Riesz-Thorin interpolation theorem we also have a contractive homomorphism $\varphi: B^{p,\ast}_L(X)\rightarrow B^2_L(X)$. In \cite{ZZ}, the authors have proved that $i$ and $\varphi$ induce two group isomorphisms at the $K$-theory level for any finite dimensional simplicial complex $X$, more general we have the following lemma.
    
\begin{lemma}
    Let $i$ and $\varphi$ be two homomorphisms as above, then they induce two group isomorphisms at the $K$-theory level for all locally compact, second countable, Hausdorff space $X$.
\end{lemma}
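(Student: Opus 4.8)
The plan is to promote the finite-dimensional case to the general case by treating all three assignments
$$X\longmapsto K_\ast(B^p_L(X)),\qquad X\longmapsto K_\ast(B^{p,\ast}_L(X)),\qquad X\longmapsto K_\ast(B^2_L(X))$$
as homology-type functors on locally compact, second countable, Hausdorff spaces (with proper continuous maps) for which $i_\ast$ and $\varphi_\ast$ are natural transformations. Write $\mathcal{A}_L(\,\cdot\,)$ for any one of these three localization algebras. First I would record the structural properties they share, all established in the $C^\ast$-setting in Chapter 6 of \cite{WillettYuBook} and in the $\ell^p$-setting in \cite{ZZ}: functoriality and strong Lipschitz homotopy invariance, together with a Mayer--Vietoris six-term exact sequence for a decomposition of $X$ into two closed subspaces, in such a way that $i$ and $\varphi$ induce morphisms of these sequences. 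Since \cite{ZZ} already proves the lemma for every finite-dimensional simplicial complex, the usual skeletal induction is subsumed, and the sole remaining task is to pass from finite-dimensional simplicial complexes to an arbitrary locally compact, second countable, Hausdorff space $X$.

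For the approximation I would use that the one-point compactification $X^{+}$ is compact and metrizable (being compact, Hausdorff and second countable, hence metrizable). Fix a sequence of finite open covers $\mathcal{U}_k$ of $X^{+}$ with mesh tending to zero and with $\mathcal{U}_{k+1}$ refining $\mathcal{U}_k$; let $N_k$ be the nerve of $\mathcal{U}_k$, a finite-dimensional simplicial complex, and let $\pi_k\colon N_{k+1}\to N_k$ be the simplicial map induced by the refinement. Because the localization algebra of Definition \ref{deflocalg2} is controlled by neighborhoods of the diagonal in $X^{+}\times X^{+}$, the canonical barycentric maps $X^{+}\to N_k$ assemble, together with the $\pi_k$, into a comparison between $K_\ast(\mathcal{A}_L(X))$ and the inverse system $\{K_\ast(\mathcal{A}_L(N_k)),(\pi_k)_\ast\}$ (working with the pointed nerves based at the point at infinity and the corresponding reduced theories, to match the role of $X^{+}$ in Definition \ref{deflocalg2}). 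The heart of this step is a Milnor-type continuity exact sequence
$$0 \longrightarrow {\varprojlim_{k}}^{1} K_{\ast+1}(\mathcal{A}_L(N_k)) \longrightarrow K_\ast(\mathcal{A}_L(X)) \longrightarrow \varprojlim_{k} K_\ast(\mathcal{A}_L(N_k)) \longrightarrow 0,$$
valid for each of the three algebras, which I would derive from the Mayer--Vietoris and homotopy properties exactly as in the continuity theorem for localization K-theory. Naturality of the whole construction guarantees that $i_\ast$ and $\varphi_\ast$ induce a morphism from the sequence for $B^{p,\ast}_L$ to those for $B^p_L$ and $B^2_L$.

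Granting these Milnor sequences, the conclusion is immediate: by \cite{ZZ} the maps $i_\ast$ and $\varphi_\ast$ are isomorphisms on each finite-dimensional complex $N_k$, hence they are isomorphisms on both $\varprojlim_{k} K_\ast(\mathcal{A}_L(N_k))$ and ${\varprojlim_{k}}^{1} K_{\ast+1}(\mathcal{A}_L(N_k))$, and the five lemma applied to the morphism of Milnor sequences shows that $i_\ast$ and $\varphi_\ast$ are isomorphisms on $K_\ast(\mathcal{A}_L(X))$; no separate direct-limit argument is needed, since second countability already forces $\sigma$-compactness and the inverse system above absorbs the non-compact case through $X^{+}$. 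The main obstacle I expect is establishing the Milnor continuity sequence in the Banach-algebra setting: the arguments of \cite{WillettYuBook} lean on functional calculus and quasi-central approximate units that are unavailable for $\ell^p$ Roe-type algebras, so the telescoping and mapping-cone estimates must be redone using only the metric control of propagation and supports, and one must verify in particular that the Riesz--Thorin interpolation map $\varphi$ is genuinely compatible with the connecting homomorphisms of the inverse system, rather than merely being a contraction on each individual term.
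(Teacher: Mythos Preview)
Your proposal is essentially the paper's own argument: reduce to the compact case via $X^{+}$, approximate by nerves of a refining sequence of finite covers, invoke a Milnor $\varprojlim^{1}$ sequence for each of the three localization theories, and finish with the five lemma using the known finite-dimensional case from \cite{ZZ}. The only noteworthy difference is packaging: the paper does not attempt to build the Milnor sequence by hand from Mayer--Vietoris and homotopy alone, but instead verifies that $X\mapsto K_\ast(B^{p,\ast}_L(X))$ and $X\mapsto K_\ast(B^2_L(X))$ satisfy the three Steenrod homology axioms (homotopy, Mayer--Vietoris for closed pairs, and the \emph{cluster axiom} for countable disjoint unions) and then quotes \cite[Proposition~7.3.4]{HigsonRoeBook} to obtain the Milnor sequence directly---so the ``main obstacle'' you flag is absorbed into citing \cite{ZZ} and \cite{WillettYuBook} for those axioms rather than redoing the telescoping estimates.
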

\begin{proof}
    The proof for $i$ and the proof for $\varphi$ are very similar, so we just prove the lemma for $\varphi$. By \cite[Proposition 5.18]{ZZ}, we have the following claim.\par 
    \textbf{Claim}: the homomorphism $\varphi$ induces a group isomorphism at the $K$-theory level for any finite dimensional simplicial complex. \par
    Let $X^+$ be the one-point compactification of $X$, then we have $K_{\ast}(B^{p,\ast}_L(X))\cong K_{\ast}(B^{p,\ast}_L(X^+))/K_{\ast}(B^{p,\ast}_L(\{\infty\}))$ and $K_{\ast}(B^2_L(X))\cong K_{\ast}(B^2_L(X^+))/K_{\ast}(B^2_L(\{\infty\}))$, then by the Claim, we just need to prove the lemma for compact spaces. Thus, in what follows, we assume $X$ is a compact space. \par
    We can endow $X$ with a metric that induces the topology since $X$ is a metrizable space, then $X$ can be represented as the inverse limit of a sequence of finite simplicial complexes by the following construction. Fix a sequence of finite covers $\{C_k\}$ such that $\sup_{U\in C_k}\mathrm{diam (U)}\rightarrow 0$ as $k\rightarrow \infty$ and such that each element of $C_{k+1}$ is contained in an element of $C_k$, then a sequence of nerve spaces of $C_k$ produces an inverse system $\cdots\rightarrow N_{C_3}\rightarrow N_{C_2}\rightarrow N_{C_1}$ and $X$ is its inverse limit.  \par
    By the Chapter 6 of \cite{WillettYuBook} and Section 5 of \cite{ZZ}, the two families of functors from the category of locally compact, second countable, Hausdorff spaces to the category of abelian groups that are given by $F^{p,\ast}_{\ast}: X\mapsto K_{\ast}(B^{p,\ast}_L(X))$ and $F^2_{\ast}: X\mapsto K_{\ast}(B^2_L(X))$, respectively, satisfy the axioms of generalized Steenrod homology theory, i.e. (1) $F^{p,\ast}_{\ast}$ and $F^2_{\ast}$ are homotopy functors; (2) $F^{p,\ast}_{\ast}(X)$ and $F^2_{\ast}(X)$ have Mayer-Vietoris sequence for a pair of closed subsets of $X$; (3) cluster axiom: if $X$ is a countable disjoint union of spaces $X_j$, then the inclusions $X_j\rightarrow X$ induce two families of isomorphisms $\prod_j F^{p,\ast}_{\ast}(X_j)\cong F^{p,\ast}_{\ast}(X)$ and $\prod_j F^2_{\ast}(X_j)\cong F^2_{\ast}(X)$. Then By \cite[Proposition 7.3.4]{HigsonRoeBook}, we have the following commutative diagram about the Milnor exact sequences:  
    \begin{displaymath}
           \xymatrix{
0\ar[r] & {\lim\limits_{\leftarrow}}^1 K_{\ast+1}(B^{p,\ast}_L(N_{C_k}))\ar[r] \ar[d]_{{\lim\limits_{\leftarrow}}^1 \varphi_{\ast}}  & K_{\ast}(B^{p,\ast}_L(X)) \ar[r] \ar[d]_{\varphi_{\ast}}  & \lim\limits_{\leftarrow}K_{\ast}(B^{p,\ast}_L(N_{C_k}))\ar[r] \ar[d]_{\lim\limits_{\leftarrow} \varphi_{\ast}} & 0  \\
0\ar[r] & {\lim\limits_{\leftarrow}}^1 K_{\ast+1}(B^2_L(N_{C_k}))\ar[r] & K_{\ast}(B^2_L(X))\ar[r] &  \lim\limits_{\leftarrow}K_{\ast}(B^2_L(N_{C_k}))\ar[r] & 0
                    }
    \end{displaymath}
Thus by the five lemma and the Claim, we complete the proof.
\end{proof}

    Finally, the Lemma \ref{indepofloc} is an obvious corollary of the above lemma.

\bibliographystyle{plain}
\bibliography{pCBCOC}

\end{document}